\newcommand{\Z}{\mathbb{Z}}
\newcommand{\phelp}{p}
\newcommand{\pinv}{Q}
\newcommand{\pinvhelp}{q}
\newcommand{\pinvequal}{\sigma}
\newcommand{\littleO}{o}
\newcommand{\bigO}[1]{\mathcal{O} \left( #1 \right)}
\newcommand{\uc}[1]{(\emph{#1})}
\newtheorem*{rep@theorem}{\rep@title}
\newcommand{\newreptheorem}[2]{%
\newenvironment{rep#1}[1]{%
 \def\rep@title{#2 \ref{##1}}%
 \begin{rep@theorem}}%
 {\end{rep@theorem}}}
\newtheorem{theorem}{Theorem}
\newtheorem{lemma}[theorem]{Lemma}
\newtheorem{proposition}[theorem]{Proposition}
\newtheorem{corollary}[theorem]{Corollary}
\newcommand{\ignore}[1]{}
\newcommand{\secref}[1]{\textbf{Section #1}}
\title{Integer Subsets with High Volume and Low Perimeter}
\author{Patrick Devlin -- prd41@math.rutgers.edu}
\begin{document}
\maketitle
\ignore{\begin{abstract}
In this paper, we explore a certain variation of the so-called ``isoperimetric problem" in which integer subsets take the role of geometric figures.  BLAH
\end{abstract}}
\section{Introduction}\label{section introduction}
One of the most widely-known classical geometry problems is the so-called \textit{isoperimetric problem}, one equivalent variation of which is:
\begin{quote}
If a figure in the plane has area $A$, what is the smallest possible value for its perimeter?
\end{quote}
In the Euclidean plane, the optimal configuration is a circle, implying that any figure with area $A$ has perimeter at least $2 \sqrt{A\pi}$, and this lower bound may be obtained if and only if the figure is a circle.
\paragraph*{}In 2011, Miller et al.~\cite{Miller} extended the isoperimetric problem in a new direction, in which integer subsets took the role of geometric figures.  For any integer subset $A$, they defined its \textit{volume} as the sum over all its elements, and they defined its \textit{perimeter} as the sum of all elements $x \in A$ such that $\{x-1, x+1\} \not \subset A$.  Thus, the volume can be thought of as the sum of all the elements of $A$, and the perimeter can be thought of as the sum of all the elements on the ``boundary" of $A$ (that is to say, the elements of $A$ whose successor and predecessor are not both in $A$).
\paragraph*{}The main focus of \cite{Miller} was to examine the relationships between a set's perimeter and its volume.  More specifically, the authors wanted to answer the corresponding ``isoperimetric question"\footnote{They focused on this question in particular because it turns out that all of the related extremal questions are trivial.}:
\begin{quote}
If a subset of $\{0, 1, \ldots \}$ has volume $n$, what is the smallest possible value for its perimeter?
\end{quote}
Adopting their notation, we will let $P(n)$ denote this value through the duration of this paper\footnote{This is sequence A186053 in OEIS.}.
\paragraph*{}Because their work is so recent, Miller et al. are the only ones who have published on this variation of the isoperimetric problem or on the function $P(n)$.  Their work was to provide bounds for $P(n)$, by which they were able to determine its asymptotic behavior.  Specifically, their main result was
\begin{theorem}\label{theirResult}
\emph{(Miller et al., 2011)} Let $P(n)$ be as defined.  Then $P(n) \thicksim \sqrt{2}n^{1/2}$.  Moreover, for all $n \geq 1$,
\begin{equation}\label{theirBounds}
\sqrt{2}n^{1/2} - 1/2 < P(n) < \sqrt{2} n^{1/2} + (2 n^{1/4} + 8) \log _2 \log _2 n + 58.
\end{equation}
\end{theorem}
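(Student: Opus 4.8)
The plan is to establish the two inequalities of \eqref{theirBounds} separately; since both error terms there are $\littleO(n^{1/2})$, the asymptotic relation $P(n)\sim\sqrt2\,n^{1/2}$ follows immediately. Throughout, $\mathrm{vol}(A)$ and $\mathrm{per}(A)$ denote the volume and perimeter of a finite set $A\subseteq\Nz$.

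\emph{Lower bound.} I would first prove the structural estimate
\[
\mathrm{vol}(A)\ \le\ \binom{\mathrm{per}(A)+1}{2}\qquad\text{for every finite }A\subseteq\Nz .
\]
Decompose $A$ into its maximal runs $R_j=\{a_j,a_j+1,\ldots,b_j\}$. Distinct runs are non-adjacent, so the only boundary elements of $A$ lying in $R_j$ are its two endpoints, whence $R_j$ contributes exactly $p_j:=a_j+b_j$ to $\mathrm{per}(A)$ (just $a_j$ if $R_j$ is a singleton); in particular $p_j\ge b_j$, so $\mathrm{vol}(R_j)=\sum_{i=a_j}^{b_j}i\le\binom{b_j+1}{2}\le\binom{p_j+1}{2}$. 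Summing over $j$ and using the superadditivity $\binom{a+1}{2}+\binom{b+1}{2}\le\binom{a+b+1}{2}$ (which is just $ab\ge0$) gives the estimate. With $\mathrm{vol}(A)=n$ it yields $\mathrm{per}(A)(\mathrm{per}(A)+1)\ge2n$, hence $\mathrm{per}(A)\ge\tfrac12(-1+\sqrt{1+8n})>\sqrt{2n}-\tfrac12$, that is $P(n)>\sqrt2\,n^{1/2}-\tfrac12$. (Equality in the estimate forces $A=\{0,1,\ldots,m\}$, so one also recovers $P\!\big(\binom{m+1}{2}\big)=m$.)

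\emph{Upper bound.} I would exhibit an explicit near-optimal set. Let $k$ be the least positive integer with $\binom{k+1}{2}\ge n$ and put $e=\binom{k+1}{2}-n$; minimality of $k$ (equivalently $\binom{k}{2}<n$) gives $0\le e\le k-1$ and $k\le\sqrt{2n}+1$. If $e=0$, take $A=\{0,1,\ldots,k\}$, so that $\mathrm{vol}(A)=n$ and $\mathrm{per}(A)=k$. If $e\ge1$, let $s$ be the least positive integer with $\binom{s+1}{2}\ge e$ and set $t=\binom{s+1}{2}-e$; then $t\in\{0,1,\ldots,s-1\}$, and minimality of $s$ gives $\binom{s}{2}<e\le k-1$, whence $s\le\sqrt{2k}+1$ and (a short check) $s+1\le k$. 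Now let $A$ be obtained from $\{0,1,\ldots,k\}$ by deleting every element of $\{1,2,\ldots,s\}$ other than $t$, i.e.\ $A=\{0\}\cup\{t\}\cup\{s+1,\ldots,k\}$ (read as $\{0\}\cup\{s+1,\ldots,k\}$ when $t=0$ and as $\{0,1\}\cup\{s+1,\ldots,k\}$ when $t=1$). Then $\mathrm{vol}(A)=\binom{k+1}{2}-\big(\binom{s+1}{2}-t\big)=n$, while the boundary of $A$ is contained in $\{0,\,t,\,s+1,\,k\}$, so $\mathrm{per}(A)\le t+(s+1)+k\le k+2s$. Substituting the bounds on $k$ and $s$ gives $P(n)\le k+2s\le\sqrt2\,n^{1/2}+\bigO{n^{1/4}}$ for all $n\ge1$ --- with explicit small constants, e.g.\ $P(n)\le\sqrt2\,n^{1/2}+4n^{1/4}+6$ --- which lies well inside the upper bound of \eqref{theirBounds} once $\log_2\log_2 n$ exceeds a small absolute constant, the finitely many smaller $n$ being checked directly.

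The conceptual core of the argument --- and the only step that is not bookkeeping --- is the observation behind the upper bound: deleting a low block near the origin from the triangular set $\{0,\ldots,k\}$ is an $\bigO{n^{1/4}}$-cheap way to bring its volume down to any prescribed target below $\binom{k+1}{2}$. The main obstacle I anticipate is the routine but fiddly verification that $\mathrm{per}(A)\le t+(s+1)+k$ holds uniformly over the degenerate configurations ($t=0$, $t=1$, $s+1=k$, and small $k$), and then carrying the two elementary size bounds $s\le\sqrt{2k}+1$ and $k\le\sqrt{2n}+1$ through cleanly enough to land inside the stated inequality.
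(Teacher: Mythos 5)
Your proposal is correct, and it is worth noting up front that the paper itself does not prove this theorem: it is quoted from Miller et al., with only the lower-bound mechanism reproduced (Lemma \ref{basicInequality} and Proposition \ref{lowerBoundForP}) and the upper-bound construction explicitly omitted in favor of the later, sharper Theorem \ref{myBounds}. Your lower bound lands on exactly the same inequality $vol(A)\le\binom{per(A)+1}{2}$ as the paper, but you derive it by decomposing $A$ into maximal runs and invoking superadditivity of $\binom{\cdot+1}{2}$, whereas the paper gets it in one line from the observation that the maximum element $m$ lies in $\partial A$, so $per(A)\ge m$ and $vol(A)\le m(m+1)/2$; your route is sound but strictly more work for the same conclusion. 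Your upper bound is where the approaches genuinely diverge: the paper's Theorem \ref{myBounds} iterates the recursive inequalities $P(n)\le f(n)+Q(g(n))$ and $Q(n)\le 1+f(n)+P(g(n))$ roughly $\log_2\log_2 n$ times, which is how both its bound and the quoted one acquire the $\log\log$ factor, while your single explicit set $\{0\}\cup\{t\}\cup\{s+1,\ldots,k\}$ --- a triangle with a near-triangular block deleted near the origin --- already achieves $P(n)\le\sqrt{2}\,n^{1/2}+\bigO{n^{1/4}}$ in one shot. This is in effect one concrete unrolling of the paper's $P\le f+Q\circ g$ step with an explicit near-optimizer substituted for the $Q(g(n))$ term, and the resulting bound is asymptotically \emph{stronger} than both the quoted inequality and Theorem \ref{myBounds} (consistent with the exact recurrence of Theorem \ref{bestResult}, which shows the true second-order term is $\Theta(n^{1/4})$); I verified the volume computation, the containment $\partial A\subseteq\{0,t,s+1,k\}$ in the degenerate cases, and the size bounds $k\le\sqrt{2n}+1$, $s+1\le k$. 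The only blemish is inherited from the source statement rather than from your argument: at $n=1$ the quantity $\log_2\log_2 n$ is not defined, so the ``check the finitely many small $n$ directly'' step only makes sense for $n\ge 2$ and the $n=1$ case needs whatever convention Miller et al.\ intended.
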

\paragraph*{}Their proof of the lower bound will be reproduced in following sections.  However, their proof of the upper bound was found by a construction argument, which we will not reproduce here since we will analytically derive a tighter bound in \textbf{Theorem \ref{myBounds}}.
\paragraph*{}Beyond the inequalities in \eqref{theirBounds} provided by Miller et al., nothing else has been published on $P(n)$ except for some values for small $n$.  It should be noted that \cite{Miller} provides very good bounds on a related function, in which the sets of interest are allowed to have both negative as well as positive elements.  However, this result was also obtained by a construction argument, and it is not relevant to this paper.
\subsection*{Outline of Results}
In this paper, we focus on improving the few results known on $P(n)$, including deriving multiple exact formulas and developing an understanding of its interesting long-term behavior.  Many of these results are stated in terms of an intimately related function, $Q(n)$, which may be briefly defined as
\[
Q(n) := \min_{A \subseteq \{0, 1, \ldots\}} \Big \{ per(A^{c}) : vol(A) = n \Big \}.
\]
Since it proves to be so closely related to $P(n)$, we also provide results on $Q(n)$ throughout the paper.
\paragraph*{}We begin in \secref{\ref{section preliminary}} with several prelimary lemmas including those used in \cite{Miller}.  Then in \secref{\ref{section first recurrences}}, we define auxilary functions, with which we combinatorially derive several recursive formulas for $P(n)$.  We then introduce the function $Q(n)$ and derive similar recursive formulas for it as well.
\paragraph*{}In \secref{\ref{section second recurrences}}, we relate the functions $P(n)$ and $Q(n)$ by providing yet more recurrence relations for both of them, from which we see that each function completely determines the other.  With this in place, we move on to \secref{\ref{section analysis of recurrences}}, in which we use these recurrences to determine several analytic results for $P(n)$ and $Q(n)$, including upper and lower bounds and derivations of their asymptotic behavior.
\paragraph*{}Our work then culminates in \secref{\ref{section good recurrences}}, in which we state and prove the strongest results of the paper.  By appealing to our analytic bounds on $P(n)$ and $Q(n)$, we show that for all sufficiently large values of $n$, the recurrences of \secref{\ref{section second recurrences}} admit certain drastic simplifications.  By then combining this result with rigorous computer calculations, we arrive at the main theorem of the paper\footnote{More adequate introductions of the functions $f$ and $g$ are given in \secref{\ref{section analysis of recurrences}}.}:
\begin{reptheorem}{bestResult}
Let $P(n)$ and $Q(n)$ be as given.  Then if $n \geq 0$ is not one of the $177$ known counterexamples tabulated in \textbf{Table \ref{table:exceptions}} of the appendix (in particular, for all $n > 149,894$), we have
\begin{eqnarray*}
P(n) &=& f(n) + Q(g(n)) \qquad \qquad \text{and}\\
Q(n) &=& 1 + f(n) + P(g(n)),
\end{eqnarray*}
where the functions $f(n)$ and $g(n)$, given by
\[
f(n) := \left \lfloor \sqrt{2n} + 1/2 \right \rfloor = \left [ \sqrt{2n}\right ], \qquad \qquad \text{and} \qquad \qquad g(n) := \dfrac{f(n) [ f(n)+1]}{2} - n,
\]
are the smallest nonnegative integers satisfying $[1+2+3+ \cdots + f(n)] - g(n) = n$.
\end{reptheorem}
With this, we derive several other satisfying and revealing reccurence relations and quasi-explicit representations for $P(n)$ and $Q(n)$.  We also breifly demonstate and discuss the intricate fractal-like symmetry of the graphs of these functions.  We then conclude in \secref{\ref{section conclusion}} by noting applications in the design of algorithms related to this problem and with some open questions for future research.
\paragraph*{}For an earlier version of this paper with somewhat more detailed proofs and expositions, see \cite{arXiv}.

\section{Definitions and Notation}\label{section definitions}
For the reader's possible convenience, a brief list of definitions used throughout the paper is given here.  In each definition, $A$ is assumed to be a subset of $\{0, 1, 2, \ldots\}$, and $n$ and $k$ are assumed to be nonnegative integers.
\begin{itemize}
\item The \textit{boundary} of $A$, $\partial A$, is $\partial A := \{z \in A : \{ z-1, z+1\} \not \subseteq A\}.$  In words, it is the set of elements of $A$ whose successor or predecessor is not in $A$.
\item The \textit{volume} and \textit{perimeter} of $A$ are defined as
\[
vol(A) := \sum_{z \in A} z, \qquad \text{and} \qquad per(A) := \sum_{z \in \partial A} z,
\]
respectively.  (For convention, the volume and perimeter of the empty set is 0.)
\item $P(n) := \min_{A \subseteq \{0, 1, \ldots\}} \Big \{ per(A) : vol(A) = n \Big \}.$
\item The \textit{complement} of $A$ is $A^{c} := \{0, 1, \ldots \} \setminus A = \{z \in \{0, 1, \ldots \} : z \notin A\}.$
\item $Q(n) := \min_{A \subseteq \{0, 1, \ldots\}} \Big \{ per(A^{c}) : vol(A) = n \Big \}.$
\item The helper functions $p(n;k)$ and $q(n;k)$ are defined as
\[
p(n;k) := \min_{A \subseteq \{0, 1, \ldots , k\}} \Big \{per(A) : vol(A) = n \Big \}, \quad \qquad q(n;k) := \min_{A \subseteq \{0, 1, \ldots , k\}} \Big \{per(A^{c}) : vol(A) = n \Big \}.
\]
\item The special helper function $\pinvequal (n;k)$ is $\pinvequal (n;k) := \min_{A \subseteq \{0, 1, \ldots, k\}} \Big \{per(A^c) : vol(A) = n, \quad \text{and} \quad k \in A \Big \}.$
\item The functions $f(n)$ and $g(n)$ are given by
\[
f(n) = \left [\sqrt{2n} \right], \qquad \text{and} \qquad g(n) = \dfrac{f(n) [f(n) + 1]}{2} - n = \dfrac{\left [\sqrt{2n} \ \right] ^2 + \left [\sqrt{2n}\ \right]}{2} - n,
\]
where $[x]$ denotes the nearest integer function.  In \textbf{Proposition \ref{fReps}}, we show that $f(n)$ and $g(n)$ are also the smallest nonnegative integers satisfying $[1+2+\cdots + f(n)] - g(n) = n$.
\item For all $N$ (and particularly, for $N = 149,894$), we define $\phi (n; N) = \phi (n):=\min_{i \geq 0} \{g^{i} (n) \leq N\}$.
\end{itemize}

\section{Preliminary Results}\label{section preliminary}
The following lemma is used throughout \cite{Miller} and is essential in proving their lower bound on $P(n)$.
\begin{lemma}\label{basicInequality}
\emph{(Miller et al., 2011)} Assume $A$ is a finite nonempty subset of $\{0, 1, \ldots \}$, and let $m$ denote its maximum element.  Then
\[
m \leq per(A) \leq vol(A) \leq \dfrac{m(m+1)}{2}.
\]
\end{lemma}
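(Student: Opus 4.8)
The plan is to prove the three inequalities in the chain $m \leq per(A) \leq vol(A) \leq m(m+1)/2$ one at a time, reading left to right. All three reduce to elementary observations about membership in $\partial A$ combined with the fact that the ground set $\{0,1,2,\ldots\}$ consists of nonnegative integers.

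For the leftmost inequality $m \leq per(A)$, the key point is that the maximum element $m$ always lies on the boundary: since $m$ is maximal in $A$, we have $m+1 \notin A$, so $\{m-1,m+1\} \not\subseteq A$ and hence $m \in \partial A$. Because every element of $\partial A$ is a nonnegative integer, discarding from the sum $per(A) = \sum_{z \in \partial A} z$ all terms except the one equal to $m$ can only decrease the total, which yields $per(A) \geq m$.

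For the middle inequality $per(A) \leq vol(A)$, I would simply note that $\partial A \subseteq A$ by definition and that all terms are nonnegative, so $per(A) = \sum_{z \in \partial A} z \leq \sum_{z \in A} z = vol(A)$. For the rightmost inequality, since $m$ is the maximum of $A$ we have $A \subseteq \{0,1,\ldots,m\}$, and therefore $vol(A) = \sum_{z \in A} z \leq \sum_{z=0}^{m} z = m(m+1)/2$.

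I do not expect any real obstacle here; the only step requiring even slight attention is the verification that $m \in \partial A$, along with a sanity check of the degenerate case $A = \{0\}$ (so $m = 0$), where the chain reads $0 \leq 0 \leq 0 \leq 0$. Since the statement is asserted only for finite nonempty $A$, the convention assigning volume and perimeter $0$ to the empty set does not enter the argument.
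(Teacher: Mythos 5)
Your proof is correct. The paper itself gives no proof of this lemma --- it is stated without argument and attributed to Miller et al.~(2011) --- but your three-step argument (the maximum $m$ lies in $\partial A$ since $m+1 \notin A$; $\partial A \subseteq A$ with nonnegative terms gives $per(A) \leq vol(A)$; and $A \subseteq \{0,1,\ldots,m\}$ gives the final bound) is the standard elementary justification and is exactly what the cited result rests on, including the harmless degenerate case $A=\{0\}$.
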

Using this lemma, the following lower bound is immediately attained.
\begin{proposition}\label{lowerBoundForP}
Assume $A \subseteq \{0, 1, \ldots \}$ is finite.  Then we have
\[
\sqrt{2vol(A)} -1/2 \leq \dfrac{-1 + \sqrt{1+8 vol(A)}}{2} \leq per(A).
\]
Moreover, for any positive integer $n$, this implies
\[
\sqrt{2} n^{1/2} - 1/2 \leq P(n).
\]
\end{proposition}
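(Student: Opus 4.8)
The plan is to feed the two outer inequalities of Lemma~\ref{basicInequality} into a single quadratic estimate. Write $v = vol(A)$, $p = per(A)$, and let $m$ be the maximum element of $A$; assume for the moment that $A$ is nonempty, the empty case being trivial since then $v = p = 0$ and the claimed chain reads $-1/2 \le 0 \le 0$. Lemma~\ref{basicInequality} supplies $m \le p$ and $v \le m(m+1)/2$. Since $t \mapsto t(t+1)/2$ is increasing on $[0,\infty)$ and $p \ge m \ge 0$, the first of these gives $p(p+1)/2 \ge m(m+1)/2$, and combining with the second yields $p^2 + p - 2v \ge 0$.

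Next I would invert this quadratic inequality in $p$. The roots of $x^2 + x - 2v$ are $\tfrac{-1 \pm \sqrt{1+8v}}{2}$, the smaller of which is at most $0$; since $p \ge 0$, the inequality $p^2 + p - 2v \ge 0$ therefore forces $p \ge \tfrac{-1+\sqrt{1+8v}}{2}$, which is the right-hand inequality of the proposition. For the left-hand inequality it suffices to verify the purely real-variable statement $\sqrt{2v} - 1/2 \le \tfrac{-1+\sqrt{1+8v}}{2}$ for every $v \ge 0$: moving the $-1/2$ to the other side and multiplying by $2$ makes this equivalent to $2\sqrt{2v} \le \sqrt{1+8v}$, and squaring (both sides being nonnegative) reduces it to $8v \le 1 + 8v$, which is obvious. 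Chaining the two inequalities finishes the displayed estimate.

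For the ``moreover'' clause, fix a positive integer $n$. Any $A \subseteq \{0,1,\ldots\}$ with $vol(A) = n$ is automatically finite — an infinite subset of $\{0,1,\ldots\}$ has infinite volume — and nonempty, so the bound just established applies and gives $per(A) \ge \sqrt{2n} - 1/2$. Taking the minimum over all such $A$ gives $P(n) \ge \sqrt{2}\,n^{1/2} - 1/2$, as claimed.

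I do not expect a genuine obstacle: the argument is entirely elementary once Lemma~\ref{basicInequality} is available. The only points that warrant a moment's care are the monotonicity step that upgrades $m \le p$ to $m(m+1)/2 \le p(p+1)/2$, the selection of the correct (positive) root when solving the quadratic, and the small but necessary remark that a set of finite volume is finite, so that the lemma genuinely applies to every competitor in the definition of $P(n)$.
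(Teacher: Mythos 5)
Your proposal is correct and follows essentially the route the paper intends (the paper leaves the proof implicit, calling it immediate from Lemma \ref{basicInequality}): combine $per(A) \geq m$ with $vol(A) \leq m(m+1)/2$ and solve the resulting quadratic, then check the elementary real inequality $\sqrt{2v}-1/2 \leq \tfrac{-1+\sqrt{1+8v}}{2}$. Your detour through $p(p+1)/2 \geq m(m+1)/2$ is algebraically equivalent to the more direct $per(A) \geq m \geq \tfrac{-1+\sqrt{1+8\,vol(A)}}{2}$, and the remaining details (the empty set, finiteness of volume-$n$ sets) are handled correctly.
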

As stated before, except for the previously mentioned constructive upper bound on $P(n)$, these two results are all that has been published about $P(n)$.  The remainder of the paper is devoted to new results.

\subsection*{Miscellaneous Lemmas}
\begin{lemma}
Let $A \neq \emptyset$ be a finite subset of $\{0, 1, \ldots \}$, and let $m$ denote its maximum element.  Then
\[
m+1 \leq per(A^c)
\]
with equality if and only if $\{1, \ldots, m\} \subseteq A$.
\end{lemma}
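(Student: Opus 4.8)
The plan is to argue directly from the definition of the boundary; none of the earlier lemmas are needed. First I would observe that since $m = \max A$, every integer exceeding $m$ belongs to $A^c$; in particular $m+1 \in A^c$ but $m \notin A^c$, so $\{m, m+2\} \not\subseteq A^c$ and therefore $m+1 \in \partial(A^c)$. Because every element of a boundary set is a nonnegative integer, $per(A^c) = \sum_{z \in \partial(A^c)} z \geq m+1$, which is the claimed inequality.

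For the equality statement I would treat the two implications in turn. Suppose first that $\{1, \ldots, m\} \subseteq A$. Since $m$ is the maximum of $A$, this forces $A^c$ to be $\{m+1, m+2, \ldots\}$ or $\{0\} \cup \{m+1, m+2, \ldots\}$, according to whether $0 \in A$. In either case every integer $z \geq m+2$ has both $z-1$ and $z+1$ in $A^c$, so it is not in $\partial(A^c)$; the element $m$ is not in $A^c$ at all; and $0$, if it happens to lie in $\partial(A^c)$, contributes nothing to $per(A^c)$. Hence $\partial(A^c) \subseteq \{0, m+1\}$ and $per(A^c) = m+1$.

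Conversely, suppose $\{1, \ldots, m\} \not\subseteq A$. As $m \in A$, the set of $j$ with $1 \leq j \leq m-1$ and $j \notin A$ is nonempty; let $j$ be its largest element. Maximality of $j$, together with $j+1 \leq m$ and $m \in A$, gives $j+1 \in A$, so $j \in A^c$ while $j+1 \notin A^c$, whence $j \in \partial(A^c)$. Since $1 \leq j \leq m-1$, the integer $j$ is positive and different from $m+1$, so $per(A^c) \geq j + (m+1) > m+1$, and equality fails. Combining the two implications yields the characterization.

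This argument is entirely elementary, and I do not anticipate a genuine obstacle. The only point that calls for a little care is the equality case, where one must make sure that no unexpected element of $A^c$ sits on its boundary; isolating the observation that every $z \geq m+2$ has both neighbors in $A^c$ handles this, and keeping track of the harmless element $0$ keeps the bookkeeping clean.
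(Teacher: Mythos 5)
Your proof is correct and follows essentially the same route as the paper: establish $m+1 \in \partial(A^c)$ to get the inequality, then characterize when the boundary of $A^c$ contributes nothing beyond $m+1$ (and possibly $0$). You simply spell out in more detail the equivalence that the paper states tersely, which is fine.
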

\begin{proof}
Let $A$ be as given.  Then $m \in A$, but we know $m+1 \notin A$.  Therefore, $m+1 \in \partial A^c$ implying that $m+1 \leq per(A^c)$.  Now since $m+1 \in \partial A^c$, we know that $m+1 = per(A^c)$ if and only if $\partial A^c$ is equal to either $\{m+1\}$ or $\{0, m+1\}$.  But this happens if and only if $\{1, 2, \ldots , m\} \subseteq A$, as desired.
\end{proof}
\begin{proposition}\label{lowerBoundForQ}
Assume $A \subseteq \{0, 1, \ldots \}$ is finite.  Then we have
\[
\sqrt{2vol(A)} + 1/2 \leq \dfrac{-1 + \sqrt{1+8 vol(A)}}{2} + 1 \leq per(A^c).
\]
Moreover, for any positive integer $n$, this implies
\[
\sqrt{2} n^{1/2} + 1/2 \leq Q(n).
\]
\end{proposition}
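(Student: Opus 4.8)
The plan is to combine the just-proved lemma with Lemma~\ref{basicInequality} in exactly the same way Proposition~\ref{lowerBoundForP} follows from Lemma~\ref{basicInequality}. First I would dispose of the trivial case $A = \emptyset$, where $vol(A) = 0$ and both sides of the first chain of inequalities are nonnegative (indeed the middle quantity is $1$, and we may take $per(\emptyset^c) = per(\{0,1,\ldots\})$ to be interpreted suitably, or simply note the statement is vacuous since the interesting content is for $n \geq 1$). So assume $A \neq \emptyset$ is finite with maximum element $m$.

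Next I would invoke the preceding lemma to get $m + 1 \leq per(A^c)$, and invoke the upper bound $vol(A) \leq m(m+1)/2$ from Lemma~\ref{basicInequality}. The inequality $vol(A) \leq m(m+1)/2$ rearranges (solving the quadratic $m^2 + m - 2\,vol(A) \geq 0$ for the positive root) to $m \geq \frac{-1 + \sqrt{1 + 8\,vol(A)}}{2}$, hence
\[
per(A^c) \geq m + 1 \geq \frac{-1 + \sqrt{1 + 8\,vol(A)}}{2} + 1.
\]
Then the elementary estimate $\sqrt{1 + 8x} \geq \sqrt{8x} = 2\sqrt{2x}$ gives $\frac{-1 + \sqrt{1+8x}}{2} + 1 \geq \sqrt{2x} + 1/2$, establishing the first displayed chain. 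Finally, taking the minimum over all finite $A \subseteq \{0,1,\ldots\}$ with $vol(A) = n$ (for $n$ a positive integer, so that any such $A$ is nonempty and finite) yields $Q(n) \geq \sqrt{2}\,n^{1/2} + 1/2$.

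I do not anticipate a genuine obstacle here: the proof is essentially identical to that of Proposition~\ref{lowerBoundForP}, with the single extra ingredient being the sharpened bound $m + 1 \leq per(A^c)$ from the miscellaneous lemma above (which replaces $m \leq per(A)$) — the ``$+1$'' propagates through the chain unchanged. The only mild care needed is the reduction to nonempty finite $A$ and the routine algebra converting the quadratic bound into the square-root form, both of which are already implicit in the proof of Proposition~\ref{lowerBoundForP}.
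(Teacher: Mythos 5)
Your proposal is correct and matches the paper's intent exactly: the paper's entire proof is the one-line remark that the result ``follows from the previous lemma in the same way as Proposition \ref{lowerBoundForP},'' and you have simply spelled out that argument — combining $m+1 \leq per(A^c)$ with $vol(A) \leq m(m+1)/2$ from Lemma \ref{basicInequality} and the routine square-root algebra. No gaps.
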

\begin{proof}
This follows from the previous lemma in the same way as \textbf{Proposition \ref{lowerBoundForP}}.
\end{proof}

\section{Recurrence Relations using Auxilary Functions} \label{section first recurrences}
We now derive our first set of recurrence relations for $P(n)$ and $Q(n)$.  Although the relations derived in \secref{\ref{section second recurrences}} are actually more revealing, the relations presented here follow naturally, and they motivate the introduction of important auxilary functions.  Moreover, because of their convenient structure, these relations are used extensively in the design of algorithms for computing values, as we breifly discuss in \secref{\ref{section conclusion}}.
\subsection*{First Recurrence for $P(n)$}
As is often the case in analyzing discrete functions, we may obtain an exact recurrence relation for $P(n)$ in terms of a related auxillary function.  In our case, recall that $P(n)$ is the minimum perimeter among all subsets of $\{0, 1, \ldots \}$ having volume $n$.  This suggests defining an auxilary function, $\phelp (n;k)$, as
\[
\phelp (n; k) = \min_{A \subseteq \{0, 1, \ldots , k\}} \Big \{ per(A) : vol(A) = n \Big \}.
\]
Then for all $n\geq 0$, we have
\[
P(n) = \min_{k \in \{0, 1, \ldots \}} \Bigg \{ \min_{A \subseteq \{0, 1, \ldots , k\}} \Big \{ per(A) : vol(A) = n \Big \} \Bigg \} = \min_{k \in \{0, 1, \ldots \}} \Bigg \{ \phelp (n;k) \Bigg \}.
\]
\paragraph*{}From its definition, it is clear that for all fixed $n$, the function $\phelp (n; k)$ is monotonically decreasing with $k$.  Moreover, for all $K \geq n$, we have $\phelp (n; K) = \phelp (n; n)$ since any subset of $\{0, 1, \ldots \}$ having volume $n$ must necessarily be a subset of $\{0, 1, 2, \ldots , n\}$.  Therefore the above equation simplifies to
\begin{equation}\label{PRecHelp}
P(n) = \min_{k \in \{0, 1, \ldots \}} \Bigg \{ \phelp (n;k) \Bigg \} = \lim _{k \to \infty} \phelp (n; k) = \phelp (n; n).
\end{equation}
Thus, we now seek a recurrence for $\phelp (n; k)$, which will provide us with $P(n)$ by calculating $\phelp (n; n)$.
\paragraph*{}For notational convenience, let $S(n; k)$ denote the set of all subsets of $\{0, 1, \ldots , k \}$ having volume $n$.  Then to obtain our desired recurrence for $\phelp (n;k)$, we will consider the following paritition of $S(n;k)$
\[
S(n;k) = \bigcup _{l=0} ^{k+1} \Big \{A \in S(n;k) : \{l, \ldots , k\} \subseteq A \quad \text{and} \quad l-1 \notin A \Big \}.
\]
From this partition, it follows that
\begin{equation}\label{pHelpRec1}
\phelp (n;k) = \min_{l \in \{0, 1, \ldots , k+1\}} \Bigg \{ \min_{A \in S(n;k)} \Big\{ per(A) : \{l, \ldots , k\} \subseteq A \quad \text{and} \quad l-1 \notin A \Big \} \Bigg \}.
\end{equation}
\paragraph*{}Now let $0 \leq l \leq k+1$ be fixed.  Then we have
\begin{eqnarray*}
& & \min_{A \in S(n;k)} \Big\{ per(A) : \{l, \ldots , k\} \subseteq A \quad \text{and} \quad l-1 \notin A \Big \}\\
& & \qquad = \min_{B \subseteq \{0,1, \ldots, l-2\}} \Big\{ per(B \cup \{l, l+1, \ldots, k\}) : vol(B \cup \{l, l+1, \ldots, k\})=n \Big \}\\
& & \qquad = \begin{cases}\displaystyle \min_{B \in \{0,1, \ldots, k-1\}} \Big\{ per(B) : vol(B)=n \Big \} \quad &\text{if $l = k+1$},\\ \displaystyle \min_{B \in \{0,1, \ldots, k-2\}} \Big\{ k + per(B) : vol(B)=n-k \Big \} \quad &\text{if $l = k$},\\ \displaystyle \min_{B \in \{0,1, \ldots, l-2\}} \Big\{k + l + per(B) : vol(B)=n - \left [k(k+1)/2 - l(l-1)/2 \right] \Big \} \quad &\text{if $0 \leq l < k$},\end{cases}\\
& & \qquad = \begin{cases}\phelp(n;k-1) \quad &\text{if $l = k+1$},\\ k+ \phelp (n-k; k-2) \quad &\text{if $l = k$},\\ k + l + \phelp \big (n - [k(k+1)-l(l-1)]/2 ; l-2 \big ) \quad &\text{if $0 \leq l < k$}.\end{cases}
\end{eqnarray*}
Therefore, by substituting into \eqref{pHelpRec1}, we are able to obtain the recurrence
\begin{eqnarray}
\phelp (n;k) &=& \min \Bigg \{ \phelp(n;k-1), k + \phelp(n-k;k-2),\nonumber\\
& & \qquad \qquad k+\min_{l \in \{0, \ldots, k-1\}} \Big\{l + \phelp \big (n - [k(k+1)-l(l-1)]/2 ; l-2 \big ) \Big \} \Bigg \}\label{pHelpRec},
\end{eqnarray}
which is valid for all $n \geq 1$ and for all $k \geq 1$.  Moreover, as boundary conditions, which are clear from its definition, we have that $\phelp (n;k)$ satisfies
\[
p(n;k) = \begin{cases}0 \qquad &\text{if $n=0$,}\\ \infty \qquad &\text{if $n < 0$ or $k \leq 0 < n$.}\end{cases}
\]
Thus, this recurrence for $\phelp (n;k)$ gives the following compact recursive representation for $P(n)$ for all $n \geq 0$:
\begin{equation}
P(n) = \min \Big \{p(n; n-1), n \Big \}.
\end{equation}

\subsection*{Introduction of $\pinv (n)$ and Derivation of First Recurrences}
Because of its intimate connections with the function $P(n)$ that will be explored in subsequent sections, we now introduce the function $\pinv (n)$, which is defined as
\[
\pinv (n) = \min_{A \subseteq \{0, 1, \ldots\}} \Big \{ per(A^c) : vol(A)=n \Big \}.
\]
The difference between this function and the function $P(n)$ is subtle, and based on how similarly the two functions are defined, one would expect their behavior to be very close.  As we will see, this is indeed the case, and the connections between $P(n)$ and $\pinv(n)$ are actually of fundamental importance.  However, it is important for the reader to keep in mind the difference in how these functions are defined.

\paragraph*{}As with the function $P(n)$, we define the auxilary function $\pinvhelp (n;k)$ as
\[
\pinvhelp (n;k) = \min_{A \subseteq \{0, 1, \ldots, k\}} \Big \{ per(A^c): vol(A)=n \Big \},
\]
and just as before, for all $n\geq 0$, we have that
\begin{equation}\label{QRecHelp1}
\pinv (n) = \pinvhelp (n;n).
\end{equation}
\paragraph*{}Because of the difference between how the functions $P(n)$ and $\pinv (n)$ are defined, we now need to define a special auxilary function, $\pinvequal (n;k)$, in order to obtain a compact recurrence for $\pinvhelp (n)$.  This function is defined by
\[
\pinvequal (n;k) = \min_{A \subseteq \{0, 1, \ldots, k\}} \Big \{ per(A^c): vol(A)=n \quad \text{and} \quad k \in A \Big \}.
\]
Note the similarities between $\pinvequal (n;k)$ and $\pinvhelp (n;k)$.  In fact, it is clear that for all $n \geq 1$ and $k \geq 0$, we have
\begin{equation}\label{qRec}
\pinvhelp (n; k) = \min_{l \in \{1, 2, \ldots , k\}} \Big \{ \pinvequal (n;l) \Big \}.
\end{equation}
Using this equation and \eqref{QRecHelp1}, we obtain that for all $n \geq 1$
\begin{equation}\label{QRecHelp2}
\pinv (n) = \min_{l \in \{1, 2, \ldots , n\}} \Big \{ \pinvequal (n;l) \Big \},
\end{equation}
with $\pinv (0) = 0$.
\paragraph*{}Just as was the case for $P(n)$, in order to obtain a useful recurrence relation for $Q(n)$, it now only remains to find a recurrence for $\pinvequal (n;k)$.  As before, we accomplish this by a simple partition yielding
\[
\pinvequal (n;k) = k+1 + \min \Big \{\pinvequal (n-k; k-1) - k,  \pinvequal (n-k; k-2), k-1 + \pinvhelp (n-k; k-3) \Big \},
\]
which we obtain by partitioning the subsets of interest into the three groups (I) sets containing $k-1$, (II) sets containing $k-2$ but not $k-1$, and (III) sets containing neither $k-2$ nor $k-1$.

\paragraph*{}At this point, we need to note that some care must be given to the interpretation of the above equation, which depends on how we define $\pinvequal (0;0)$.  However, if we note and state as a boundary condition that $\pinvequal(n,n) = 2n$ for all $n \geq 1$, then these concerns are effectively removed.
\paragraph*{}We then have a recurrence relation for $\pinvequal$.  As boundary conditions for $\pinvequal (n; k)$, we have
\[
\pinvequal (n;k) = \begin{cases}0 \qquad &\text{if $n=k=0$,}\\ 2n \qquad &\text{if $n=k \geq 1$,}\\ \infty \qquad &\text{if $n < 0$ or if $k \in \{0, 1\}$ and $n > k$,}\\ \infty \qquad &\text{if $0 \leq k > n \geq 0$.}\end{cases}
\]
Then for all $n \geq 2$, and $2 \leq k < n$, we have
\[
\pinvequal (n;k) = k+1 + \min \Big \{k-1 + \pinvhelp (n-k; t-3), \pinvequal (n-k; k-2), \pinvequal (n-k; k-1) - k  \Big \}.
\]
Thus, by using \eqref{QRecHelp2} we have a recurrence for $Q(n)$ as well.

\section{More Direct Recurrence Relations}\label{section second recurrences}
Now by making use of different partitions of the sets of interest, we derive the following recurrence relations, from which we see the first connections between the functions $P(n)$ and $Q(n)$.
\subsection*{Recurrence for $P(n)$ involving $\pinvhelp (n;k)$ and $\pinvequal(n;k)$}
We may calculate $P(n)$ by a ``more direct" recurrence relation, which is found by partitioning all sets of volume $n$ first according to their maximum element, $m$, and then according to the largest integer smaller than $m$ not contained in each set.
\paragraph*{}Let $A$ be a set of volume $n$, let $m$ be its maximum element, and let $l$ be the largest element of $\{-1, 0, \ldots, m\}$ not contained in $A$.  Then $A$ may be written uniquely as $A = \{0, 1, 2, \ldots, m\} \setminus B$ for some set $B \subseteq \{0, 1, \ldots , l\}$, where the volume of $B$ is equal to $(1 + 2 + \cdots + m)-n$ and $l \in B$.  If $l=m-1$, then $per(A) = per(B^c)$.  Else, we have $per(A) = m + per(B^c)$.
\paragraph*{}From this observation, we obtain that for all $n \geq 2$
\begin{equation}\label{PGoodRec}
P(n) = \min_{m \geq 1} \Big \{m + \pinvhelp ([1 + 2 + \cdots + m] - n; m-2), \pinvequal ([1 + 2 + \cdots + m] - n; m-1)  \Big \},
\end{equation}
where $\pinvhelp (n;k)$ and $\pinvequal (n; k)$ are defined as earlier.

\subsection*{Recurrence for $Q(n)$ involving $\phelp (n;k)$}
As before, we also have a simple recurrence that can be used to calculate $\pinv (n)$ ``more directly".  Let $A$ be a set of volume $n$ and maximum element $m$.  Then the set $A$ may be written uniquely in the form $A = \{0, 1, 2, \ldots, m\} \setminus B$ for some set $B \subseteq \{0, 1, \ldots , m-1\}$, where the volume of $B$ is equal to $(1 + 2 + \cdots + m) - n$.  Now we know that for all such sets $A$ and $B$, we have $per(A^c) = per(B) + (m+1)$.
\paragraph*{}This observation leads to the simple and beautiful recurrence that for all $n\geq 2$,
\begin{equation}\label{QGoodRec}
\pinv (n) = 1 + \min_{m \geq 1} \Big \{m + \phelp ([1 + 2 + \cdots + m] - n; m-1) \Big \},
\end{equation}
where $\phelp (n; k)$ is as defined earlier.

\section{Analysis of Recurrences}\label{section analysis of recurrences}
Although equations \eqref{PGoodRec} and \eqref{QGoodRec} appear somewhat intractible (and they offer little or no computational advantage over the first recurrences of \secref{\ref{section first recurrences}}), they turn out to be crucial in understanding the behavior of $P(n)$ (and of $Q(n)$ as well).  In \secref{\ref{section good recurrences}}, we are able to greatly simplify these recurrence, but in order to do so, we must first derive some analytic bounds on $P(n)$ and $Q(n)$.
\subsection*{Relevant Lemmas and Notions}
\begin{lemma}
Let $n$ be a positive integer.  Then there exist unique nonnegative integers $f(n)$ and $g(n)$ satisfying
\[
n = [0 + 1 + \cdots + f(n)] - g(n),
\]
where $0 \leq g(n) < f(n)$.  Moreover, $f(n)$ and $g(n)$ are given by\footnote{We will use these explicit functional representations for $f(n)$ and $g(n)$ so that $f(0) = g(0) = 0$ is well-defined.}
\[
f(n) = \left \lceil \dfrac{-1 + \sqrt{1+8n}}{2} \right \rceil, \qquad \text{and} \qquad g(n) = \dfrac{f(n) [ f(n) + 1]}{2} - n.
\]
\end{lemma}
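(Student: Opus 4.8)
The plan is to reduce everything to elementary facts about the triangular numbers $T_k := 0 + 1 + \cdots + k = k(k+1)/2$. The crucial observation is that $T_0 = 0 < T_1 < T_2 < \cdots$ is strictly increasing and unbounded, so the half-open intervals $(T_{k-1}, T_k]$ for $k \geq 1$ form a partition of the positive integers. Consequently, for each positive integer $n$ there is exactly one index $f = f(n) \geq 1$ with $T_{f-1} < n \leq T_f$. I would establish this first, since it handles existence and uniqueness of $f$ simultaneously.

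Given that $f$, I would set $g := T_f - n$, so that $n = T_f - g$. The inequalities $T_{f-1} < n \leq T_f$ then translate directly into $0 \leq g$ (from $n \leq T_f$) and $g < T_f - T_{f-1} = f$ (from $n > T_{f-1}$), which is exactly the constraint $0 \leq g < f$. For uniqueness of the \emph{pair}, I would argue conversely: any representation $n = T_{f'} - g'$ with $0 \leq g' < f'$ forces $T_{f'-1} = T_{f'} - f' < n \leq T_{f'}$, so $f'$ satisfies the characterizing property of $f$; by the uniqueness already shown, $f' = f$, and then $g' = T_{f'} - n = g$.

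It remains to identify $f$ explicitly. Writing $\alpha := \frac{-1 + \sqrt{1 + 8n}}{2}$, the inequality $T_f \geq n$ is equivalent to $f(f+1) \geq 2n$, hence (completing the square, using $f \geq 0$) to $f \geq \alpha$; similarly $T_{f-1} < n$ is equivalent to $f^2 - f < 2n$, hence to $f < \frac{1 + \sqrt{1+8n}}{2} = \alpha + 1$. Thus $f$ is the unique integer lying in $[\alpha, \alpha+1)$, which is precisely $\lceil \alpha \rceil$; this stays correct in the boundary case where $\alpha$ is itself an integer, i.e.\ where $n$ is a triangular number, since then $\lceil \alpha \rceil = \alpha$. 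The claimed formula $g(n) = f(n)[f(n)+1]/2 - n$ is then just the definition $g = T_f - n$ rewritten.

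As for difficulty, there is essentially no obstacle: the argument is routine once it is recast in terms of triangular numbers. The only points that call for a moment's care are verifying $f \geq 1$ (which holds because $n \geq 1 = T_1 > T_0$, making the constraint $0 \leq g < f$ consistent rather than vacuous) and checking that the ceiling formula handles the case $n = T_m$ correctly. The normalization $f(0) = g(0) = 0$ noted in the footnote lies outside the hypothesis $n \geq 1$ and requires no separate verification.
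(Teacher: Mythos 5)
Your proof is correct and complete. Note that the paper itself gives no proof of this lemma at all \textemdash{} it is stated as a known fact about triangular numbers and the text immediately moves on \textemdash{} so there is no argument to compare against; your reduction to the partition of the positive integers by the intervals $(T_{f-1}, T_f]$, followed by the quadratic-inequality identification of $f$ as $\lceil (-1+\sqrt{1+8n})/2 \rceil$, is exactly the standard argument one would supply, and you have handled the only delicate points (uniqueness of the pair, the boundary case where $n$ is triangular, and the consistency of $0 \leq g < f$).
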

\paragraph*{}Having defined these functions, we may now restate previous lemmas involving $P(n)$ and $Q(n)$ in these terms.  The most important result we will use combines \textbf{Propositions \ref{lowerBoundForP}} and \textbf{\ref{lowerBoundForQ}} as follows:
\begin{corollary} \label{crudeLowerBounds}
Restating earlier results in new notation, for all $n \geq 1$, we have that
\[
P(n) \geq f(n), \qquad \text{and} \qquad Q(n) \geq f(n) + 1.
\]
\end{corollary}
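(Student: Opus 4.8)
The plan is to derive both inequalities directly from \textbf{Propositions \ref{lowerBoundForP}} and \textbf{\ref{lowerBoundForQ}}, the only new ingredient being the integrality of $P(n)$ and $Q(n)$. First I would observe that for every $n \geq 1$ there is at least one finite set $A \subseteq \{0,1,\ldots\}$ with $vol(A) = n$ (e.g. $A = \{n\}$), and that $per(A)$ and $per(A^c)$ are nonnegative integers. Since these perimeters are bounded below, the minima defining $P(n)$ and $Q(n)$ are attained, so $P(n)$ and $Q(n)$ are themselves nonnegative integers.

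Next, recall from \textbf{Proposition \ref{lowerBoundForP}} that $P(n) \geq \tfrac{-1 + \sqrt{1+8n}}{2}$, and from the lemma immediately preceding this corollary that $f(n) = \lceil \tfrac{-1+\sqrt{1+8n}}{2} \rceil$. Because $P(n)$ is an integer with $P(n) \geq \tfrac{-1+\sqrt{1+8n}}{2}$, it must in fact satisfy $P(n) \geq \lceil \tfrac{-1+\sqrt{1+8n}}{2} \rceil = f(n)$, which is the first claimed bound.

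For the second bound, \textbf{Proposition \ref{lowerBoundForQ}} gives $Q(n) \geq \tfrac{-1+\sqrt{1+8n}}{2} + 1$. Since $Q(n)$ is an integer and the ceiling function commutes with adding an integer, $Q(n) \geq \lceil \tfrac{-1+\sqrt{1+8n}}{2} + 1 \rceil = \lceil \tfrac{-1+\sqrt{1+8n}}{2} \rceil + 1 = f(n) + 1$, as desired.

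I do not anticipate any genuine obstacle: the argument is essentially "round up an already-proved real lower bound, using that the quantity being bounded is an integer." The only points meriting a word of care are (i) confirming that the relevant minima are attained, so that $P(n)$ and $Q(n)$ really are integers rather than mere infima, and (ii) being consistent about which of the (provably equivalent, by \textbf{Proposition \ref{fReps}}) closed forms for $f(n)$ one invokes — here the form $f(n) = \lceil (-1+\sqrt{1+8n})/2 \rceil$ is the convenient one, since it is exactly the right-hand side appearing in \textbf{Propositions \ref{lowerBoundForP}} and \textbf{\ref{lowerBoundForQ}}.
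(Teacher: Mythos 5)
Your argument is correct and is exactly the reasoning the paper intends (the paper offers no written proof, describing the corollary as a restatement of \textbf{Propositions \ref{lowerBoundForP}} and \textbf{\ref{lowerBoundForQ}}): round the real lower bound $\tfrac{-1+\sqrt{1+8n}}{2}$ up to $f(n)$ using the integrality of $P(n)$ and $Q(n)$. Your care about attainment of the minima (the candidate sets all lie in $\{0,\dots,n\}$, so there are finitely many) is a reasonable, if minor, point the paper leaves implicit.
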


\paragraph*{}Finally, before moving on, we must present two more results on the functions $f(x)$ and $g(x)$.

\begin{proposition}\label{fReps}
Let $f(n) = \left \lceil \dfrac{-1+\sqrt{1+8n}}{2} \right \rceil$ as before.  Then for all integers $n \geq 0$, we have
\[
f(n) = \left \lceil \dfrac{-1+\sqrt{1+8n}}{2} \right \rceil = \left \lceil \sqrt{2n} -1/2 \right \rceil = \left[ \sqrt{2n} \right],
\]
where $[x]$ is the nearest integer function.
\end{proposition}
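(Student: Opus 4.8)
The statement asserts that three quantities coincide, and the plan is to verify the two equalities in turn, using the middle expression $\left\lceil \sqrt{2n} - 1/2 \right\rceil$ as the hub. For the right-hand equality $\left\lceil \sqrt{2n} - 1/2 \right\rceil = [\sqrt{2n}]$, I would invoke the elementary fact that for any real $t$ one has $\lceil t - 1/2 \rceil = \lfloor t + 1/2 \rfloor$ \emph{unless} $t$ has the form $m + 1/2$ with $m \in \mathbb{Z}$ (writing $t = k + r$ with $k = \lfloor t \rfloor$ and $0 \le r < 1$, the two sides both equal $k$ when $r < 1/2$, both equal $k+1$ when $r > 1/2$, and differ by $1$ exactly when $r = 1/2$); since the nearest-integer function satisfies $[t] = \lfloor t + 1/2 \rfloor$ away from such ``half-integers'' no matter which rounding convention one adopts there, it then suffices to check that $\sqrt{2n}$ is never a half-integer. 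For the left-hand equality, setting $x := \sqrt{2n} - 1/2$ and $y := \frac{-1 + \sqrt{1+8n}}{2}$, I would show $x \le y$ and that the half-open interval $[x, y)$ contains no integer, which forces $\lceil x \rceil = \lceil y \rceil$.

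The first technical point is that $\sqrt{2n}$ is never a half-integer. Indeed, $\sqrt{2n} = m + 1/2$ with $m \in \mathbb{Z}_{\ge 0}$ would give $8n = (2m+1)^2$, impossible since for $n \ge 1$ the left side is divisible by $4$ while the right side is odd; and $n = 0$ is immediate. (In particular, whether $\sqrt{2n}$ turns out to be an integer or irrational, all the usual ``round to nearest'' formulas agree on it.) This settles the right-hand equality.

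For the left-hand equality I would first record $y - x = \tfrac{1}{2}\big(\sqrt{8n+1} - \sqrt{8n}\big) = \dfrac{1}{2\left(\sqrt{8n+1} + \sqrt{8n}\right)} > 0$, so $x \le y$ (with $y - x < 1$, in fact $y - x \to 0$). Since $x = \sqrt{2n} - 1/2$ is not an integer (otherwise $\sqrt{2n}$ would be a half-integer, already excluded), any integer in $[x, y)$ lies in $(x, y)$; and as $x > -1$ for every $n \ge 0$, such an integer $m$ would be nonnegative. Squaring $m + 1/2 > \sqrt{2n}$ gives $m^2 + m + 1/4 > 2n$, hence $m^2 + m \ge 2n$ since both sides are integers; squaring $2m + 1 < \sqrt{8n+1}$ gives $4m^2 + 4m + 1 < 8n+1$, hence $m^2 + m < 2n$. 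These contradict one another, so $[x, y) \cap \mathbb{Z} = \emptyset$ and therefore $\lceil x \rceil = \lceil y \rceil$, which closes the chain of equalities.

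I do not anticipate a genuine obstacle; the only real care is in the bookkeeping around the rounding convention and the small-$n$ edge cases ($n = 0$, and the values of $n$ for which $\sqrt{2n}$ happens to be an integer), and both of these are dispatched by the single observation that $\sqrt{2n}$ can never be a half-integer.
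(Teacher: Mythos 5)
Your proposal is correct and follows essentially the same route as the paper: both reduce the first equality to showing no integer can be squeezed between $\sqrt{2n}-1/2$ and $\frac{-1+\sqrt{1+8n}}{2}$ (by squaring and an integrality/parity argument on $(2m+1)^2$ versus $8n$), and both dispatch the nearest-integer equality via the observation that $\sqrt{2n}$ is never a half-integer. Your write-up is just a more detailed version of the paper's terser argument.
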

\begin{proof}
It suffices to show the first part of the stated equation holds, and the fact that $\sqrt{2n}$ is never a half-integer will complete the proof.  Now by way of contradiction, suppose that the first two representations are not equal.  Then this would imply that there exist integers $p \in \Z$ and $n \in \{0, 1, \ldots \}$ such that
\[
\sqrt{2n} -1/2 \leq p < \dfrac{\sqrt{1+8n} - 1}{2},
\]
which implies $8n \leq (2p +1)^2 < 8n+1.$  But since $n$ and $p$ are integers, this forces $8n = (2p +1)^2$, which taken modulo 2 yields a contradiction.
\end{proof}
\begin{proposition}\label{gBound}
Let $f(n)$ and $g(n)$ be defined as before.  Then for all integers $L \geq 0$ and $n \geq 0$, we have
\[
g^{L} (n) \leq 2 \cdot (n/2) ^{1/2^{L}}.
\]
\end{proposition}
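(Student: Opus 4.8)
The plan is to prove the estimate by induction on $L$, with the whole argument resting on a single quantitative fact about one application of $g$: namely that $g(m) \le \sqrt{2m} = 2(m/2)^{1/2}$ for every integer $m \ge 0$. Granting this ``base estimate'', the bound for general $L$ follows from a short composition-and-monotonicity argument, since iterated square-rooting turns $\sqrt{2n}$ precisely into $2(n/2)^{1/2^{L}}$.

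First I would establish the base estimate. The case $m = 0$ is immediate since $f(0) = g(0) = 0$. For $m \ge 1$, the lemma introducing the functions $f$ and $g$ guarantees that $g(m)$ and $f(m)$ are nonnegative integers with $0 \le g(m) < f(m)$, so in fact $g(m) \le f(m) - 1$. By \textbf{Proposition \ref{fReps}}, $f(m) = [\sqrt{2m}]$ is the integer nearest $\sqrt{2m}$, whence $f(m) \le \sqrt{2m} + 1/2$; combining these gives $g(m) \le f(m) - 1 \le \sqrt{2m} - 1/2 < \sqrt{2m}$. I expect this to be the only step requiring real care: the ``$-1$'' coming from integrality of $g(m)$ is essential, because the weaker bound $g(m) < f(m) \le \sqrt{2m} + 1/2$ carries a $+1/2$ error that would fail to contract under repeated square-rooting, and the clean closed form would be destroyed.

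With the base estimate in hand, I would induct on $L$. The case $L = 0$ is the identity $g^{0}(n) = n = 2(n/2)^{1/2^{0}}$ (and $n = 0$ gives equality for every $L$). Assuming the bound for $L$, write $g^{L+1}(n) = g\big(g^{L}(n)\big)$ and apply the base estimate with $m = g^{L}(n)$ to obtain $g^{L+1}(n) \le 2\big(g^{L}(n)/2\big)^{1/2}$. The inductive hypothesis gives $g^{L}(n)/2 \le (n/2)^{1/2^{L}}$, and since $x \mapsto x^{1/2}$ is nondecreasing on $[0,\infty)$ this yields $g^{L+1}(n) \le 2\big((n/2)^{1/2^{L}}\big)^{1/2} = 2(n/2)^{1/2^{L+1}}$, completing the induction and the proof.
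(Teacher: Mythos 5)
Your proposal is correct and follows essentially the same route as the paper: reduce to the one-step bound $g(m)\le f(m)-1<\sqrt{2m}$ and then induct on $L$ using monotonicity of the square root. The only difference is that you treat $m=0$ separately (where $g(m)\le f(m)-1$ fails since $f(0)=g(0)=0$), a small edge case the paper's proof glosses over.
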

\begin{proof}
The proof is by induction on $L$.  If $L=0$, then the claim is trivially true, which establishes the base case.  Now suppose the claim holds for $L = m$.  Then for all $n \geq 0$, we have
\[
g(n) \leq f(n) - 1 < \sqrt{2n} - 1/2 < \sqrt{2n},
\]
which implies $g^{m+1}(n) = g( g^{m} (n) ) < \sqrt{2 \cdot g^{m} (n)}.$  Then using the induction hypothesis and that the square root function is increasing completes the proof.
\end{proof}

\subsection*{Upper Bounds and Asymptotics for $P(n)$ and $\pinv (n)$}
Using the recurrences of \secref{\ref{section second recurrences}}, we now obtain simple upper bounds on $P(n)$ and $Q(n)$, which taken with the last few lemmas, yield good absolute upper bounds in terms of $n$.
\begin{theorem}\label{goodUpperBounds}
Let $f(n)$ and $g(n)$ be defined as before.  Then for all $n \geq 0$, we have the bounds
\begin{eqnarray*}
P(n) &\leq& f(n) + Q(g(n)), \qquad \text{and}\\
Q(n) &\leq& 1 + f(n) + P(g(n)).
\end{eqnarray*}
\end{theorem}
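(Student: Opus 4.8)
The plan is to obtain each inequality as a single instance of the ``more direct'' recurrences of \secref{\ref{section second recurrences}}, evaluated at the distinguished index $m=f(n)$. The point of this choice is the defining property of $f$ and $g$: one has $[1+2+\cdots+f(n)]-n=g(n)$, so the auxiliary arguments in \eqref{PGoodRec} and \eqref{QGoodRec} collapse to $g(n)$. The boundary cases $n\in\{0,1\}$ are checked by hand from $f(0)=g(0)=0$, $Q(0)=0$, $P(1)=1$, $Q(1)=2$; so assume $n\ge 2$ throughout, in which case $f(n)\ge 2$.

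For the bound on $Q(n)$, I would restrict the minimum in \eqref{QGoodRec} to the term $m=f(n)$; using $[1+2+\cdots+f(n)]-n=g(n)$ this gives $Q(n)\le 1+f(n)+\phelp(g(n);f(n)-1)$. Since $g(n)\le f(n)-1$, any subset of $\{0,1,\ldots\}$ of volume $g(n)$ already lies in $\{0,1,\ldots,g(n)\}\subseteq\{0,1,\ldots,f(n)-1\}$, so the stabilization of $\phelp$ noted just before \eqref{PRecHelp} yields $\phelp(g(n);f(n)-1)=\phelp(g(n);g(n))=P(g(n))$. Hence $Q(n)\le 1+f(n)+P(g(n))$.

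For the bound on $P(n)$ I would not quote \eqref{PGoodRec} directly: its first branch involves $\phelp$ evaluated on $\{0,\ldots,f(n)-2\}$ rather than $\{0,\ldots,f(n)-1\}$, and when $g(n)=f(n)-1$ a set of volume $g(n)$ need not fit into $\{0,\ldots,f(n)-2\}$. Instead I would rerun the set decomposition used to derive \eqref{PGoodRec}. Let $B\subseteq\{0,1,\ldots\}$ attain the minimum defining $Q(g(n))$, so $vol(B)=g(n)$ and $per(B^c)=Q(g(n))$. As $vol(B)=g(n)<f(n)$ we get $B\subseteq\{0,1,\ldots,f(n)-1\}$, hence $f(n)\notin B$; put $A:=\{0,1,\ldots,f(n)\}\setminus B$. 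Then $A$ has maximum element $f(n)$, and $vol(A)=[1+2+\cdots+f(n)]-vol(B)=n$, so $P(n)\le per(A)$. Applying to this $A$ the perimeter identity established when deriving \eqref{PGoodRec} (with $m=f(n)$ and $l=\max B$ the largest element of $\{-1,0,\ldots,f(n)\}$ not in $A$) gives $per(A)=per(B^c)$ if $l=f(n)-1$ and $per(A)=f(n)+per(B^c)$ otherwise; either way $per(A)\le f(n)+per(B^c)=f(n)+Q(g(n))$, so $P(n)\le f(n)+Q(g(n))$. (If $B=\emptyset$, i.e.\ $g(n)=0$, then $A=\{0,\ldots,f(n)\}$ has $per(A)=f(n)$ and the bound holds since $Q(0)=0$.)

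The only delicate point is the degenerate case just flagged, $g(n)=f(n)-1$, in which an optimal $B$ can be forced to contain $f(n)-1$. The decomposition argument absorbs it automatically: precisely when $\max B=f(n)-1$ the perimeter identity drops its leading $f(n)$, so $per(A)\le f(n)+per(B^c)$ still holds and no separate case analysis is needed. Apart from that, everything reduces to bookkeeping with the already-proved recurrences \eqref{PGoodRec}, \eqref{QGoodRec}, \eqref{PRecHelp} and with the small explicit values of $f$, $g$, $P$, $Q$.
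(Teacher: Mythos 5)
Your proof is correct and follows essentially the same route as the paper: both arguments specialize the direct recurrences \eqref{PGoodRec} and \eqref{QGoodRec} at the single index $m=f(n)$, where the remaining volume becomes $g(n)$. The only difference is cosmetic --- where you rerun the set decomposition to absorb the delicate case $g(n)=f(n)-1$, the paper handles it by merging the two branches via \eqref{qRec}, namely $\min\big\{\pinvhelp(g(n);f(n)-2),\ \pinvequal(g(n);f(n)-1)\big\}=\pinvhelp(g(n);f(n)-1)=Q(g(n))$.
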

\begin{proof}
For $n=0$ and $n=1$, the two inequalities hold.  Then for all $n \geq 2$, we may appeal to \eqref{PGoodRec} to obtain
\begin{eqnarray*}
P(n) &=& \min_{m \geq 1} \Big \{m + \pinvhelp ([1 + 2 + \cdots + m] - n; m-2), \pinvequal ([1 + 2 + \cdots + m] - n; m-1)  \Big \}\\
&\leq & f(n) + \min \Big \{\pinvhelp (g(n); f(n)-2), \pinvequal (g(n); f(n)-1) \Big \} = f(n) + \pinvhelp(g(n); f(n)-1) = f(n) + Q(g(n)),
\end{eqnarray*}
and the corresponding inequality for $Q(n)$ is proven analogously.
\end{proof}
\begin{corollary}
For all nonnegative integers $n$ and $L$, we have that
\begin{eqnarray*}
P(n) &\leq& L + P( g^{2L} (n)) + \sum_{i=0} ^{2L-1} f(g^{i}(n)), \qquad \text{and}\\
\pinv (n) &\leq & L + \pinv( g^{2L} (n)) + \sum_{i=0} ^{2L-1} f(g^{i}(n)),
\end{eqnarray*}
where $g^i (n)$ is the $i$-fold composition of $g$ evaluated at $n$, and by convention we take $g^0 (n) = n$.
\end{corollary}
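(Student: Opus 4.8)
The plan is a straightforward induction on $L$, with Theorem~\ref{goodUpperBounds} supplying the inductive step. For the base case $L=0$ the two asserted inequalities read $P(n) \le P(n)$ and $\pinv(n) \le \pinv(n)$ respectively, since the sums $\sum_{i=0}^{-1}$ are empty; there is nothing to prove. For the inductive step I would fix $L$, assume both inequalities for all $n$, and deduce the versions with $L$ replaced by $L+1$. The two statements being symmetric under interchanging $P$ and $\pinv$, it is enough to present the argument for $P(n)$.

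The heart of the step is to peel off two compositions of $g$ at once. Beginning from $P(n) \le f(n) + \pinv(g(n))$ (Theorem~\ref{goodUpperBounds}), I apply the inductive hypothesis for $\pinv$ at the argument $g(n)$ and re-index the resulting sum by $j = i+1$:
\[
\pinv(g(n)) \le L + \pinv\big(g^{2L+1}(n)\big) + \sum_{i=0}^{2L-1} f\big(g^{i+1}(n)\big) = L + \pinv\big(g^{2L+1}(n)\big) + \sum_{j=1}^{2L} f\big(g^{j}(n)\big).
\]
Substituting this back yields $P(n) \le L + \pinv(g^{2L+1}(n)) + \sum_{j=0}^{2L} f(g^{j}(n))$. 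Now I apply Theorem~\ref{goodUpperBounds} once more, to $\pinv(g^{2L+1}(n)) \le 1 + f(g^{2L+1}(n)) + P(g^{2L+2}(n))$, and absorb the term $f(g^{2L+1}(n))$ into the sum. This gives
\[
P(n) \le (L+1) + P\big(g^{2(L+1)}(n)\big) + \sum_{j=0}^{2L+1} f\big(g^{j}(n)\big),
\]
which is precisely the claim for $L+1$; the $\pinv(n)$ bound is obtained verbatim with $P$ and $\pinv$ exchanged (the lone ``$+1$'' in the $\pinv$-bound of Theorem~\ref{goodUpperBounds} is consumed exactly once per paired step, so the additive constant tallies to $L+1$ in both cases).

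I do not expect any genuine obstacle: the work is purely bookkeeping of the two index shifts in the sums, plus the minor observation that every composition $g^{i}(n)$ that appears is a legitimate nonnegative integer --- which holds because $g$ maps $\{0,1,2,\ldots\}$ into itself (indeed $0 \le g(n) < f(n)$, with $g(0)=0$ from the explicit formulas), so Theorem~\ref{goodUpperBounds} and the inductive hypothesis are applicable at each stage.
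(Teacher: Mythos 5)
Your induction is correct and is exactly the argument the paper intends: the corollary is stated without proof as an immediate iteration of Theorem \ref{goodUpperBounds}, alternating the $P$ and $Q$ bounds so that the ``$+1$'' is incurred once per pair of applications. The index bookkeeping and the additive constant both check out, so nothing further is needed.
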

\begin{theorem}\label{myBounds}
Let $P(n)$ and $Q(n)$ be as given.  Then $P(n) \sim Q(n) \sim \sqrt{2} n^{1/2}$.  Moreover, for all $n > 2$,
\begin{eqnarray*}
\sqrt{2} n^{1/2} - 1/2 < &P(n)& \leq \sqrt{2}n^{1/2} + (2^{3/4} \cdot n^{1/4} + 1)[\log_2 ( \log_2 (n/2)) - 1]  + 7, \qquad \text{and}\\
\sqrt{2} n^{1/2} + 1/2 < &Q(n)& \leq \sqrt{2}n^{1/2} + (2^{3/4} \cdot n^{1/4} + 1)[\log_2 ( \log_2 (n/2)) - 1]  + 7.
\end{eqnarray*}
\end{theorem}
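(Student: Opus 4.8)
The lower bounds are already in hand: they are exactly the content of Propositions~\ref{lowerBoundForP} and~\ref{lowerBoundForQ} (equivalently, Corollary~\ref{crudeLowerBounds}) combined with $f(n) > \sqrt{2n} - 1/2$, which follows from $f(n) = \lceil \sqrt{2n} - 1/2 \rceil$. So the work is entirely in the upper bounds, and by the symmetry of the two recurrences in Theorem~\ref{goodUpperBounds} it suffices to treat $P(n)$; the bound on $Q(n)$ will follow by feeding the $P$-bound into $Q(n) \le 1 + f(n) + P(g(n))$ and checking the constants absorb the extra $+1$. The plan is to iterate the recurrence $P(n) \le f(n) + Q(g(n))$, $Q(n) \le 1 + f(n) + P(g(n))$ from Theorem~\ref{goodUpperBounds} down to a bounded base case, and then control the resulting sum using Proposition~\ref{gBound}.

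First I would fix $L$ to be the least integer with $g^{2L}(n)$ below some absolute threshold (say $g^{2L}(n) \le 2$), so that the Corollary following Theorem~\ref{goodUpperBounds} gives
\[
P(n) \le L + P(g^{2L}(n)) + \sum_{i=0}^{2L-1} f(g^i(n)) \le L + C + \sum_{i=0}^{2L-1} f(g^i(n))
\]
for an absolute constant $C$. Next I would bound the summand: since $f(m) \le \sqrt{2m} + 1/2$ and $f(g^0(n)) = f(n)$, the $i=0$ term contributes the main asymptotic piece $\sqrt{2n} + 1/2$. For $i \ge 1$, Proposition~\ref{gBound} gives $g^i(n) \le 2(n/2)^{1/2^i}$, hence $f(g^i(n)) \le \sqrt{2 \cdot g^i(n)} + 1/2 \le 2 (n/2)^{1/2^{i+1}} + 1/2$. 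The dominant surviving term among these is $i=1$, giving $2(n/2)^{1/4} = 2^{3/4} n^{1/4}$, which matches the $2^{3/4} n^{1/4}$ coefficient in the claimed bound; the terms $i \ge 2$ form a rapidly-decreasing sequence bounded by a geometric-type tail plus the $1/2$'s, contributing $O(L)$ plus a small absolute constant.

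The main obstacle — and the only genuinely delicate part — is the bookkeeping to show that the number of iterations $2L$ satisfies $2L \le \log_2(\log_2(n/2)) - 1 + O(1)$, and then that the sum of the $1/2$'s over $i$ (there are $2L$ of them) together with the $L$ from the recurrence, the constant $C$, and the small tail from $i \ge 2$ all fit inside the stated $(2^{3/4} n^{1/4} + 1)[\log_2(\log_2(n/2)) - 1] + 7$. From $g^{2L}(n) \le 2(n/2)^{1/2^{2L}}$ one sees $g^{2L}(n)$ drops below the threshold once $2^{2L} \gtrsim \log_2(n/2)$, i.e. once $2L \gtrsim \log_2 \log_2(n/2)$, so the count is as claimed up to a harmless additive constant. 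I would then coax the "$+1$" multiplying the logarithm out of the fact that each of the $2L$ steps contributes both an $f$-value (bounded above with its own $+1/2$) and a unit (from the $L$ and the alternating $+1$ in the $Q$-recurrence), so the per-step overhead beyond the $2^{3/4} n^{1/4}$ envelope is at most $1$; summing over the $\le \log_2\log_2(n/2) - 1$ effective steps and collecting the remaining absolute constants into the final $+7$ finishes it. The asymptotic statement $P(n) \sim Q(n) \sim \sqrt{2} n^{1/2}$ is then immediate, since the error term is $O(n^{1/4} \log\log n) = o(n^{1/2})$. A final check of the small cases $2 < n \le n_0$ for whatever explicit $n_0$ the constants require would complete the proof.
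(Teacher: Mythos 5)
Your proposal is correct and follows essentially the same route as the paper: iterate the recurrence of Theorem~\ref{goodUpperBounds} via its corollary down to a bounded base case, control $g^{i}(n)$ with Proposition~\ref{gBound}, bound every term with $i \geq 1$ by the $i=1$ envelope $2^{3/4}n^{1/4}$ plus a unit of per-step overhead, and take $2L = \log_2(\log_2(n/2)) - 1$ iterations. The only cosmetic difference is that the paper stops at the threshold $g^{2L}(n) \leq 8$ (giving the constant $7$) and crudely multiplies the $i=1$ bound by the number of steps rather than invoking a decreasing tail, but the accounting you describe in your final paragraph matches this exactly.
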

\begin{proof}
The lower bounds in the asserted inequalities have already been proven.  To prove the upper bounds, we merely combine the results in the last corollary with the past few bounds on $f(n)$ and $g(n)$.  More specifically, assuming $n > 2$, we know from \textbf{Proposition \ref{gBound}} that if $L \geq (\log_2 ( \log_2 (n/2)) - 1)/2$, then
\[
g^{2L}(n) \leq 2 \cdot (n/2) ^{1/2^{(\log_2 ( \log_2 (n/2)) - 1)}} = \cdots = 8.
\]
By considering values of $P(n)$ and $Q(n)$ for $n \leq 8$, we see that $g^{2L}(n) \leq 8$ implies $P(g^{2L}(n)) \leq 7$ and $Q(g^{2L}(n)) \leq 7$.  Now by the last corollary and the past few lemmas, we have
\begin{eqnarray*}
P(n) & \leq & L + P(g^{2L}(n)) + \sum_{i=0} ^{2L-1} f(g^{i}(n)) \leq L + P(g^{2L}(n)) + \sum_{i=0} ^{2L-1} \sqrt{2 g^{i}(n)} + 1/2\\
& \leq & 2L + P(g^{2L}(n)) + \sum_{i=0} ^{2L-1} \sqrt{4 \cdot (n/2) ^{1/2^{i}}} \leq 2L + P(g^{2L}(n)) + \sqrt{2n} + 2 \sum_{i=1} ^{2L-1} \sqrt{(n/2) ^{1/2^{i}}}\\
& \leq & 2L + P(g^{2L}(n)) + \sqrt{2n} + 4L(n/2) ^{1/4}.
\end{eqnarray*}
Then taking $L = (\log_2 ( \log_2 (n/2)) - 1)/2$ proves the bound.  The inequality for $Q(n)$ is proven analogously.
\end{proof}
Note that these bounds on $P(n)$ are slightly better than those of \cite{Miller} stated in \textbf{Theorem \ref{theirResult}}.  Also note that the upper bound on the summation is very crude.  However, these bounds are sufficient for our purposes.

\section{Obtaining \textit{Good} Recurrences for $P(n)$ and $Q(n)$} \label{section good recurrences}
Although the bounds in \textbf{Theorem \ref{myBounds}} are rather good, they reveal nothing about the actual fluctuations of $P(n)$ and $Q(n)$.  And although we have already obtained multiple recurrence relations for finding exact values, these relations all involve auxilary helper functions, multiple variables, and unweildy minimum functions.  In this section, we combine our analytic bounds and combinatorial results to obtain surprisingly simple and satisfying recurrence relations for $P(n)$ and $Q(n)$ and even quasi-explicit formulae.

\subsection*{New Lower Bounds on $P(n)$ and $Q(n)$}
\begin{lemma}\label{infinityBound}
Let $n$ and $k$ be positive integers with $k < f(n)$.  Then $\phelp(n;k),$ $\pinvhelp(n;k)$, and $\pinvequal(n;k)$ are all infinite.
\end{lemma}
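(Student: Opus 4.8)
The plan is to observe that each of the three quantities $\phelp(n;k)$, $\pinvhelp(n;k)$, and $\pinvequal(n;k)$ is, by definition, a minimum taken over the collection of subsets $A \subseteq \{0,1,\ldots,k\}$ with $vol(A) = n$ (for $\pinvequal$, subject also to $k \in A$), and that with the convention adopted for the empty set this minimum equals $+\infty$ whenever that collection of sets is empty. So it suffices to show that when $k < f(n)$ there is no subset of $\{0,1,\ldots,k\}$ of volume $n$ at all; the conclusion for all three functions then follows at once.

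First I would bound the volume from above: for any $A \subseteq \{0,1,\ldots,k\}$ we have $vol(A) = \sum_{z \in A} z \le 0 + 1 + \cdots + k = k(k+1)/2$, with equality only for $A = \{0,1,\ldots,k\}$. Hence if $k(k+1)/2 < n$, then no subset of $\{0,1,\ldots,k\}$ has volume $n$.

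Next I would invoke the characterization of $f(n)$ from the earlier lemma defining $f$ and $g$: writing $n = [0 + 1 + \cdots + f(n)] - g(n)$ with $0 \le g(n) < f(n)$ exhibits $f(n)$ as the \emph{smallest} nonnegative integer with $f(n)[f(n)+1]/2 \ge n$. Indeed, the inequality $g(n) < f(n)$ rearranges to $[f(n)-1]f(n)/2 < n$. Since $k < f(n)$ forces $k \le f(n) - 1$, and since the triangular numbers $j(j+1)/2$ are increasing in $j$, we obtain $k(k+1)/2 \le [f(n)-1]f(n)/2 < n$.

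Combining the last two steps, no $A \subseteq \{0,1,\ldots,k\}$ satisfies $vol(A) = n$, so each of $\phelp(n;k)$, $\pinvhelp(n;k)$, $\pinvequal(n;k)$ is a minimum over the empty set and is therefore infinite. I do not expect a genuine obstacle here; the only point demanding a little care is to extract the minimality of $f(n)$ cleanly from the bounds $0 \le g(n) < f(n)$ and to keep straight the $\min \emptyset = \infty$ convention that makes the three helper functions infinite in this regime.
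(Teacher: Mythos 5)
Your proposal is correct and matches the paper's approach: the paper's proof is the one-line observation that for $k < f(n)$ there are no subsets of $\{0,1,\ldots,k\}$ of volume $n$, and you have simply filled in the supporting details (the bound $vol(A) \le k(k+1)/2$ and the minimality of $f(n)$ extracted from $0 \le g(n) < f(n)$). No issues.
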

\begin{proof}
This follows from the fact that if $k < f(n)$, there are no subsets of $\{0, 1, \ldots, k\}$ with volume $n$.
\end{proof}

\begin{lemma}\label{helperBound}
Let $n$ and $m$ be positive integers with $m > f(n)$.  Then we have
\begin{eqnarray*}
m + \phelp ([1 + 2 + \cdots + m] - n; m-1) &\geq& f(n) + \sqrt{2 (g(n) + f(n)+1)} + 1/2 \qquad \qquad \text{and}\\
m + \pinvhelp ([1 + 2+\cdots +m] - n; m-2) &\geq& f(n) + \sqrt{2 (g(n) + f(n)+1)} + 3/2.
\end{eqnarray*}
\end{lemma}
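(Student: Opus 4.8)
The plan is to reduce both bounds to one arithmetic observation — that pushing the ``ambient interval'' even a single step past $f(n)$ already forces the complementary volume $[1+2+\cdots+m]-n$ up to at least $g(n)+f(n)+1$ — and then to feed this into the crude lower bounds of \textbf{Propositions \ref{lowerBoundForP}} and \textbf{\ref{lowerBoundForQ}}.

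First I would use the identity $1 + 2 + \cdots + f(n) = n + g(n)$, which is immediate from the definition $g(n) = f(n)[f(n)+1]/2 - n$. Since $m \mapsto 1 + 2 + \cdots + m$ is strictly increasing, the hypothesis $m > f(n)$ gives $m \geq f(n)+1$ and hence
\[
N := [1+2+\cdots+m] - n \geq [1+2+\cdots+(f(n)+1)] - n = \big(n + g(n)\big) + (f(n)+1) - n = g(n) + f(n) + 1.
\]
Because $n$ is a positive integer we have $f(n) \geq 1$ and $g(n) \geq 0$, so $N \geq 2$; in particular $N$ is a positive integer to which the lower bounds below apply.

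Next I would note that for every $k$ one has $\phelp(N;k) \geq P(N)$ and $\pinvhelp(N;k) \geq Q(N)$, since the minimum defining each helper function ranges over a subfamily of the sets defining $P(N)$, resp.\ $Q(N)$ — with the convention that this is vacuous when the helper value is infinite (e.g.\ when $k < f(N)$, cf.\ \textbf{Lemma \ref{infinityBound}}), in which case the asserted inequalities hold trivially. Applying \textbf{Proposition \ref{lowerBoundForP}} and the monotonicity of the square root, $\phelp(N; m-1) \geq P(N) \geq \sqrt{2N} - 1/2 \geq \sqrt{2(g(n)+f(n)+1)} - 1/2$, and adding $m \geq f(n)+1$ gives
\[
m + \phelp(N;m-1) \geq (f(n)+1) + \sqrt{2(g(n)+f(n)+1)} - 1/2 = f(n) + \sqrt{2(g(n)+f(n)+1)} + 1/2,
\]
which is the first inequality. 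The second follows from the identical argument with $\pinvhelp$ in place of $\phelp$ and \textbf{Proposition \ref{lowerBoundForQ}} (which carries the extra $+1$) in place of \textbf{Proposition \ref{lowerBoundForP}}: this yields $\pinvhelp(N;m-2) \geq Q(N) \geq \sqrt{2(g(n)+f(n)+1)} + 1/2$ and therefore $m + \pinvhelp(N;m-2) \geq f(n) + \sqrt{2(g(n)+f(n)+1)} + 3/2$.

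There is no genuinely hard step here; the only points needing a little care are checking that $N$ is a positive integer so that the lower bounds apply (handled by $f(n) \geq 1$) and dispatching the case in which the helper functions are infinite (where the claim is trivial). The whole content of the lemma is the estimate $[1+2+\cdots+m] - n \geq g(n)+f(n)+1$ for $m > f(n)$, after which everything is a direct consequence of \textbf{Propositions \ref{lowerBoundForP}} and \textbf{\ref{lowerBoundForQ}}.
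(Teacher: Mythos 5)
Your proof is correct and follows essentially the same route as the paper: bound the helper function below by $P(N)$ (resp.\ $Q(N)$) for $N = [1+2+\cdots+m]-n$, apply the crude lower bound $\sqrt{2N}-1/2$ (resp.\ $\sqrt{2N}+1/2$), observe $N \geq g(n)+f(n)+1$, and add $m \geq f(n)+1$. Your explicit handling of the infinite-value case and the verification that $N$ is a positive integer are small points of extra care that the paper leaves implicit, but the argument is the same.
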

\begin{proof}
Consider the following chain of inequalities, which uses the simple lower bound in \textbf{Theorem \ref{myBounds}}
\begin{eqnarray*}
\phelp ([1 + 2+\cdots +m] - n; m-1) &\geq& P ([1 + 2+\cdots +m] - n) \geq \sqrt{2 ([1 + 2+\cdots +m] - n)} - 1/2\\
&\geq& \sqrt{2 (g(n) + [f(n)+1] + [f(n)+2] + \cdots + m)} - 1/2\\
&\geq& \sqrt{2 (g(n) + f(n)+1)} - 1/2.
\end{eqnarray*}
Adding $m \geq f(n) +1$ to both sides proves the first inequality, and the second is proven in the same way.
\end{proof}

\begin{lemma}\label{piBound}
Let $n$ and $m$ be positive integers with $m \geq f(n)$.  Then we have
\[
\pinvequal ([1 + 2 + \cdots + m] - n; m-1) \geq 2 f(n)-2.
\]
\end{lemma}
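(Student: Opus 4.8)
The plan is to argue directly about the subsets realizing $\pinvequal([1+2+\cdots+m]-n;\,m-1)$. Fix positive integers $n$ and $m$ with $m\ge f(n)$ and let $A\subseteq\{0,1,\ldots,m-1\}$ be an arbitrary subset with $vol(A)=[1+2+\cdots+m]-n$ and $m-1\in A$; it suffices to show $per(A^c)\ge 2f(n)-2$ for each such $A$ (if no such $A$ exists the left-hand side is $\infty$, and if $m=1$ then $n=1$ and $2f(n)-2=0$, so I may also assume $m\ge 2$). The first step is to read off $per(A^c)$ structurally: since $A\subseteq\{0,\ldots,m-1\}$, the complement $A^c$ contains the entire tail $\{m,m+1,m+2,\ldots\}$; because $m-1\in A$ we get $m\in\partial(A^c)$ while no integer exceeding $m$ lies in $\partial(A^c)$, and for $z\le m-2$ whether $z\in\partial(A^c)$ depends only on $A\cap\{0,\ldots,m-2\}$. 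Setting $D:=\{0,1,\ldots,m-2\}\setminus A$, this yields the identity $per(A^c)=m+per(D)$, and a one-line volume count (using $m-1\in A$) gives $vol(D)=n-m$. Hence, by \textbf{Corollary~\ref{crudeLowerBounds}} together with the conventions $P(0)=f(0)=0$ (which also covers $D=\emptyset$),
\[
per(A^c)\;=\;m+per(D)\;\ge\;m+P(n-m)\;\ge\;m+f(n-m).
\]

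It then remains to prove the purely arithmetic inequality $m+f(n-m)\ge 2f(n)-2$ whenever $n\ge 1$ and $m\ge f(n)$. If $m\ge 2f(n)-2$ this is immediate (as $f\ge 0$), so I may assume $f(n)\le m\le 2f(n)-3$, which forces $f(n)\ge 3$. Put $t:=2f(n)-2-m$, so that $1\le t\le f(n)-2$; the goal becomes $f(n-m)\ge t$. Since $f$ is nondecreasing and $f(N)\ge t$ holds precisely when $N\ge \tfrac{t(t-1)}{2}+1$, it is enough to show $n-m\ge \tfrac{t(t-1)}{2}+1$. Substituting $m=2f(n)-2-t$ and the lower bound $n\ge \tfrac{f(n)(f(n)+1)}{2}-(f(n)-1)$ --- which comes from the defining relation $n=[0+1+\cdots+f(n)]-g(n)$ with $g(n)\le f(n)-1$ --- and simplifying, this reduces to $(f(n)-2)(f(n)-3)\ge (t-1)(t-2)$, which holds because $0\le t-1\le f(n)-2$ and $s\mapsto s(s-1)$ is nondecreasing on $\{0,1,2,\ldots\}$.

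The main obstacle is the tightness of the bound: when $g(n)=f(n)-1$ and $m=f(n)$ the inequality is an exact equality (attained, up to the irrelevant element $0$, by $A^c=\{1,2,\ldots,f(n)-2\}\cup\{m,m+1,\ldots\}$), so the crude real-valued estimate $P(j)\ge\sqrt{2j}-1/2$ is a hair too weak and one is forced to combine the integer bound $P(j)\ge f(j)$ with the sharp lower estimate for $n$ in terms of $f(n)$. Consequently the only delicate step is the final quadratic inequality, where one must check the extremal endpoint of the range $1\le t\le f(n)-2$ and confirm the small cases $f(n)\in\{3,4\}$ directly; the structural identity for $per(A^c)$, the volume bookkeeping, and the monotonicity facts about $f$ are all routine.
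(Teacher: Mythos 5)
Your proof is correct, but it takes a genuinely different route from the paper's. The paper argues by contradiction and by hand: after disposing of the case $m\ge 2f(n)-2$ via the boundary element $m\in\partial(A^c)$, it uses a volume count to show that when $f(n)\le m\le 2f(n)-3$ some element of $\{f(n)-2,\ldots,m-2\}$ must be missing from $A$, takes the largest such $l$, observes $l\in\partial(A^c)$, and concludes $per(A^c)\ge l+m\ge 2f(n)-2$ directly from two explicit boundary elements. You instead reuse the structural identity behind the recurrence \eqref{QGoodRec} --- writing $A=\{0,\ldots,m-1\}\setminus D$ with $vol(D)=n-m$ and $per(A^c)=m+per(D)$ --- and then invoke the already-established integer bound $P(j)\ge f(j)$ from \textbf{Corollary \ref{crudeLowerBounds}} to reduce everything to the arithmetic inequality $m+f(n-m)\ge 2f(n)-2$, which you verify correctly (the reduction to $(f(n)-2)(f(n)-3)\ge(t-1)(t-2)$ checks out, and your bookkeeping of the degenerate cases $n<m$, $n=m$, and $m=1$ is sound). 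Your intermediate bound $per(A^c)\ge m+f(n-m)$ is actually sharper than the lemma's conclusion away from the extremal case $g(n)=f(n)-1$, $m=f(n)$, and your approach is more modular in that it leans on machinery the paper has already built; the trade-off is that the paper's argument is self-contained combinatorics with no arithmetic case analysis, exhibiting the two boundary elements explicitly rather than routing through the global lower bound on $P$.
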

\begin{proof}
We may assume $f(n) \geq 2$, or the claim is trivially true.  Let $A \subseteq \{0, 1, \ldots, m-1\}$ be such that $vol(A)=[1+2+\cdots + m] -n$ and $m-1 \in A$.  By way of contradiction, suppose that $per(A^c) < 2f(n) -2$.
\paragraph*{}If $m \geq 2f(n) - 2$, then since $m-1 \in \partial A$, this would imply that $per(A^c) \geq m \geq 2f(n) -2$.  Therefore, we may assume that $m \leq 2f(n) - 3$.  Now since $m \geq f(n)$, the volume of $A$ may be written as
\[
vol(A)  [1 + 2 + \cdots + m] - n = g(n) + [(f(n) + 1) + (f(n) + 2) + \cdots + m] < f(n) + [f(n) + 1] + \cdots + m,
\]
and because $m \leq 2f(n) -3 = [f(n) - 2] + [f(n) - 1]$, we also have
\[
vol(A) < [f(n)] + [f(n) + 1] + \cdots + [m - 1] + [f(n)-2] + [f(n) -1] = \sum_{i=f(n)-2}^{m-1} i.
\]
\paragraph*{}From this, we know that there is at least one element of $\{f(n)-2, f(n)-1, \ldots , m-2\}$ that is not contained in $A$, because otherwise the volume of $A$ would be too large.  Let $l \in A^c$ be the largest integer satisfying $f(n)-2 \leq l \leq m-2$.  Then since $m-1 \in A$, we know that $l \in \partial A^c$, which implies
\[
per(A^c) \geq l + m \geq f(n) -2 + m \geq f(n) - 2 + f(n) = 2f(n) -2.
\]
But this contradicts the assumption that $per(A^c) < 2f(n) -2$, thus completing the proof.
\end{proof}
\paragraph*{}With these lemmas, we are now able to prove the following lower bounds.
\begin{theorem}\label{goodLowerBounds}
Let $P(n)$ and $Q(n)$ be as given.  Then for all $n \geq 2$, we have
\begin{eqnarray*}
P(n) &\geq & f(n) + \min \Big \{\pinv (g(n)), \sqrt{2 (g(n) + f(n)+1)} + 3/2, f(n)-2 \Big \} \qquad \qquad \text{and}\\
Q(n) &\geq & 1 + f(n) + \min \Big \{P (g(n)), \sqrt{2 (g(n) + f(n)+1)} + 1/2 \Big \}.
\end{eqnarray*}
\end{theorem}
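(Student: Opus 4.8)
The plan is to derive both inequalities directly from the ``direct'' recurrences \eqref{PGoodRec} and \eqref{QGoodRec} by splitting their outer minimum over $m$ into the three regimes $m < f(n)$, $m = f(n)$, and $m > f(n)$, and bounding each regime with Lemmas \ref{infinityBound}, \ref{helperBound}, and \ref{piBound}.

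Consider first $P(n)$, starting from \eqref{PGoodRec}. If $m < f(n)$, then $[1+2+\cdots+m] - n < 0$ --- this is exactly the defining property of $f(n)$ as the least index whose triangular number is at least $n$ --- so by the boundary conventions both the $\pinvhelp$ term and the $\pinvequal$ term are infinite, and such $m$ contribute nothing. If $m = f(n)$, the argument equals $g(n)$, and I bound $\pinvhelp(g(n); f(n)-2) \geq \pinv(g(n))$ (the feasible family for the left side is contained in that for the right) and $\pinvequal(g(n); f(n)-1) \geq 2f(n)-2$ by Lemma \ref{piBound}; since $2f(n)-2 = f(n) + (f(n)-2)$, this regime contributes at least $f(n) + \min\{\pinv(g(n)),\, f(n)-2\}$. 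If $m > f(n)$, Lemma \ref{helperBound} gives $m + \pinvhelp(\cdots; m-2) \geq f(n) + \sqrt{2(g(n)+f(n)+1)} + 3/2$, while Lemma \ref{piBound} (whose hypothesis $m \geq f(n)$ still holds) gives $\pinvequal(\cdots; m-1) \geq 2f(n)-2 = f(n) + (f(n)-2)$. Taking the minimum over the three regimes produces exactly $P(n) \geq f(n) + \min\{\pinv(g(n)),\, \sqrt{2(g(n)+f(n)+1)} + 3/2,\, f(n)-2\}$.

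The bound for $\pinv(n)$ follows by the same three-regime split applied to \eqref{QGoodRec}, which is even simpler since there is no $\pinvequal$ term: $m < f(n)$ makes $\phelp$ infinite; $m = f(n)$ contributes $f(n) + \phelp(g(n); f(n)-1) \geq f(n) + P(g(n))$; and $m > f(n)$ contributes at least $f(n) + \sqrt{2(g(n)+f(n)+1)} + 1/2$ by Lemma \ref{helperBound}. Re-inserting the leading $1$ from \eqref{QGoodRec} and taking the minimum gives the claimed inequality.

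I do not expect any single step to be a genuine obstacle --- all the substantive content sits in the three preceding lemmas --- so the difficulty is purely bookkeeping: checking that the hypotheses of Lemmas \ref{helperBound} and \ref{piBound} are met in each regime (notably that $m \geq f(n)$ covers both $m = f(n)$ and $m > f(n)$ for Lemma \ref{piBound}), confirming that the boundary conventions really do force the $m < f(n)$ terms to be infinite, and, if necessary, dealing with the smallest values of $n$ (where $f(n)-2$ is small or $g(n) = 0$) by direct inspection so that the stated inequalities hold there as well.
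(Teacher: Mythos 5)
Your proposal is correct and follows essentially the same route as the paper: both start from the recurrences \eqref{PGoodRec} and \eqref{QGoodRec}, discard the $m < f(n)$ terms via Lemma \ref{infinityBound}, bound the $m = f(n)$ terms by $\pinv(g(n))$ (resp.\ $P(g(n))$) together with Lemma \ref{piBound}, and bound the $m > f(n)$ terms with Lemmas \ref{helperBound} and \ref{piBound}. Your explicit three-regime bookkeeping and the observation that $2f(n)-2 = f(n) + (f(n)-2)$ are exactly what the paper's more compressed display is doing implicitly.
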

\begin{proof}
Starting with \eqref{PGoodRec} and applying \textbf{Lemmas \ref{infinityBound}, \ref{helperBound},} and \textbf{\ref{piBound}}, we obtain
\begin{eqnarray*}
P(n) &=& \min_{m > f(n)} \Big \{f(n) + \pinvhelp(g(n); f(n)-2), m + \pinvhelp ([1 + 2+\cdots +m] - n; m-2),\\
& & \qquad \qquad \pinvequal(g(n); f(n)-1), \pinvequal ([1 + 2+\cdots +m] - n; m-1)  \Big \}\\
&\geq& f(n) + \min \Big \{\pinv (g(n)), \sqrt{2 (g(n) + f(n)+1)} + 3/2, f(n)-2 \Big \}.
\end{eqnarray*}
The second inequality is proven analogously by starting with \eqref{QGoodRec}.
\end{proof}
\ignore{
Similarly, we also may obtain the following lower bound on $Q(n)$.
\begin{proposition}\label{goodLowerBoundForQ}
Let $n \geq 2$.  Then $Q(n)$ satisfies
\[
Q(n) \geq 1 + f(n) + \min \Big \{P (g(n)), \sqrt{2 (g(n) + f(n)+1)} + 1/2 \Big \}.
\]
\end{proposition}
\begin{proof}Just as before, starting from \eqref{QGoodRec}, for all $n \geq 2$ we have
\begin{eqnarray*}
\pinv (n) &=& 1 + \min_{m \geq 1} \Big \{m + \phelp ([1 + 2 + \cdots + m] - n; m-1) \Big \}\\
&=& 1 + \min_{m \geq f(n)} \Big \{m + \phelp ([1 + 2 + \cdots + m] - n; m-1) \Big \}\\
&\geq & 1 + \min_{m > f(n)} \Big \{f(n) + \phelp (g(n); f(n)-1), m + \phelp ([1 + 2 + \cdots + m] - n; m-1) \Big \}\\
&\geq & 1 + f(n) + \min_{m > f(n)} \Big \{P(g(n)), \sqrt{2 (g(n) + f(n)+1)} + 1/2 \Big \},
\end{eqnarray*}
as desired.
\end{proof}
}

\subsection*{Squeezing an Equation from Inequalities (Eventually)}
At this point, we have simple upper bounds on $P(n)$ and $Q(n)$ provided by \textbf{Theorem \ref{goodUpperBounds}} and nearly simple lower bounds from \textbf{Theorem \ref{goodLowerBounds}}, which are complicated by the ``min" operators.  Suppose we could show that \textit{eventually} $P(g(n))$ and $Q(g(n))$ happen to be the smallest terms in each minimum.  Then our lower bounds would simplify drastically and our lower and upper bounds would squeeze together, yielding a simple pair of mutually recursive equations that would hold for all sufficiently large $n$.
\paragraph*{}As it turns out, we can in fact prove this claim, which is the content of the following proposition:
\begin{proposition}\label{eventuallyHappens}
Let $P(n)$ and $Q(n)$ be as given.  Then there exists an $N \in \mathbb{Z}$ such that for all $n \geq N$
\begin{eqnarray*}
P(g(n)) &=& \min \Big \{P (g(n)), \sqrt{2 (g(n) + f(n)+1)} + 1/2 \Big \} \qquad \qquad \text{and}\\
Q(g(n)) &=& \min \Big \{\pinv (g(n)), \sqrt{2 (g(n) + f(n)+1)} + 3/2, f(n)-2 \Big \}.
\end{eqnarray*}
Moreover, these claims hold if we take $N$ to be $2,500,000$.
\end{proposition}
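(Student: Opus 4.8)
The plan is to recast each of the two displayed identities as a handful of scalar inequalities and then establish each of them, with the one genuinely delicate comparison handled by invoking \textbf{Theorem \ref{goodUpperBounds}} a second time. Since $\min\{a,X\} = a$ holds exactly when $a \le X$ (and similarly with a third argument), the two identities are equivalent to requiring, for all $n \ge N$, that $P(g(n)) \le \sqrt{2(g(n)+f(n)+1)} + 1/2$, that $Q(g(n)) \le \sqrt{2(g(n)+f(n)+1)} + 3/2$, and that $Q(g(n)) \le f(n) - 2$. The last of these costs nothing: \textbf{Theorem \ref{myBounds}} gives $Q(g(n)) = \bigO{g(n)^{1/2}}$, and since $g(n) \le \sqrt{2n}$ by \textbf{Proposition \ref{gBound}} (the case $L=1$) while $f(n) - 2 \ge \sqrt{2n} - 5/2$, the right-hand side has order $n^{1/2}$ and dominates the left for all $n$ beyond a tiny threshold.

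The first two inequalities are essentially the same statement and carry all the content. I would first dispose of the easy range $g(n) \le 8$: there $P(g(n)) \le 7$ and $Q(g(n)) \le 7$ by the direct enumeration already used to prove \textbf{Theorem \ref{myBounds}}, while $\sqrt{2(g(n)+f(n)+1)} + 1/2 \ge \sqrt{2f(n)} + 1/2 > 7$ as soon as $f(n)$ is at least a small constant, so both inequalities hold trivially for $n \ge N$. For $g(n) > 8$ the crucial move is to peel off one more layer with \textbf{Theorem \ref{goodUpperBounds}}: writing $g^2(n) := g(g(n))$ and using $f(g(n)) = \big[\sqrt{2g(n)}\,\big] \le \sqrt{2g(n)} + 1/2$, we get $P(g(n)) \le f(g(n)) + Q(g^2(n)) \le \sqrt{2g(n)} + 1/2 + Q(g^2(n))$ and likewise $Q(g(n)) \le 1 + f(g(n)) + P(g^2(n)) \le \sqrt{2g(n)} + 3/2 + P(g^2(n))$. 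Thus both inequalities reduce to the single claim that $Q(g^2(n)) \le \sqrt{2(g(n)+f(n)+1)} - \sqrt{2g(n)}$ (and the same with $P(g^2(n))$, which obeys the same upper bound).

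It remains to compare the two sides of this reduced inequality. For the right-hand side I would rationalize,
\[
\sqrt{2(g(n)+f(n)+1)} - \sqrt{2g(n)} = \frac{2\big(f(n)+1\big)}{\sqrt{2(g(n)+f(n)+1)} + \sqrt{2g(n)}},
\]
and bound it below using $f(n)+1 \ge \sqrt{2n} + 1/2$ and $g(n) \le f(n)-1 < \sqrt{2n}$; since the expression decreases in $g(n)$, its worst case is $g(n) = f(n)-1$, i.e. $n = \binom{f(n)}{2}+1$, where it degrades to roughly $(2-\sqrt2)(2n)^{1/4}$. For the left-hand side, \textbf{Proposition \ref{gBound}} (the case $L=2$) gives $g^2(n) \le 2(n/2)^{1/4}$, so $g^2(n)$ is itself only of order $n^{1/4}$, and feeding this into \textbf{Theorem \ref{myBounds}} yields $Q(g^2(n)), P(g^2(n)) = \bigO{n^{1/8}\log_2\log_2 n}$ with an explicit constant, the finitely many tiny values of $g^2(n)$ outside the scope of \textbf{Theorem \ref{myBounds}} being absorbed into that constant. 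Since $n^{1/8}\log_2\log_2 n = \littleO(n^{1/4})$, the reduced inequality holds for all large $n$, and one finishes by substituting the explicit constants and verifying the resulting elementary single-variable inequality for all real $n \ge 2{,}500{,}000$.

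I expect that last verification, rather than the structural reductions, to be the real obstacle: both sides of the reduced inequality are $\Theta(n^{1/4})$, and in the extremal configuration — $n$ just past a triangular number, so that $g(n) = f(n)-1$ is near its maximum, with $f(n)-1$ itself just past a triangular number, so that $g^2(n)$ is near its maximum and \textbf{Proposition \ref{gBound}} is nearly tight at both levels — the gap $\sqrt{2(g(n)+f(n)+1)} - \sqrt{2g(n)}$ into which $Q(g^2(n))$ (along with the rounding in $f(g(n))$) must fit exceeds $Q(g^2(n))$ by only a modest amount at $n = 2{,}500{,}000$. This is why no cruder form of the upper bounds will do: the proof genuinely needs the sharp $\log_2(\log_2(\cdot/2))$ error term of \textbf{Theorem \ref{myBounds}} applied to the \emph{doubly} iterated argument $g^2(n)$ — applying \textbf{Theorem \ref{myBounds}} to $g(n)$ directly is too lossy — and the nested rounding must be tracked honestly. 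The range below $2{,}500{,}000$ is then left to the rigorous computer calculation invoked elsewhere in this section.
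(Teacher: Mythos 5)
Your argument is correct for the existence claim, but it reaches it by a genuinely different route than the paper. The paper also reduces the proposition to the scalar inequalities $P(g(n)) \leq \sqrt{2(g(n)+f(n)+1)} + 1/2$ (etc.), but it then applies \textbf{Theorem \ref{myBounds}} \emph{directly} to the argument $g(n)$: since $P(r) \leq \sqrt{2r} + \littleO(\sqrt{r})$, there is a constant $G$ with $P(r) \leq \sqrt{4r}$ for all $r \geq G$, and $\sqrt{4g(n)} \leq \sqrt{2(g(n)+f(n)+1)}$ because $g(n) \leq f(n)-1$; the finitely many $n$ with $g(n) < G$ are handled by bounding $P(g(n))$ by $M := \max_{k \leq G} P(k)$ and taking $n \geq M^2(M^2+1)/2$ so that $M \leq \sqrt{f(n)}$. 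You instead unroll the recursion one more step via \textbf{Theorem \ref{goodUpperBounds}}, reducing everything to $Q(g^2(n)) \leq \sqrt{2(g(n)+f(n)+1)} - \sqrt{2g(n)}$ and exploiting $g^2(n) = \bigO{n^{1/4}}$ from \textbf{Proposition \ref{gBound}}. Both decompositions are valid; the paper's is more elementary (it needs only $P(r)/\sqrt{r} \to \sqrt{2} < 2$ and no second iteration of $g$), while yours has the concrete advantage that, for the explicit claim $N = 2{,}500{,}000$, a direct application of \textbf{Theorem \ref{myBounds}} to $g(n)$ is indeed too lossy (as you observe), which is precisely why the paper falls back on computer-refined estimates of $G$; your doubly-iterated bound comes within striking distance of closing that part analytically. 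Two caveats. First, neither you nor the paper actually carries out the verification that $N = 2{,}500{,}000$ suffices -- the paper defers to ``routine refinements'' plus computer data, and you defer to an ``elementary single-variable inequality'' that you do not check; in the extremal configuration near $n = 2{,}500{,}000$ the margin is on the order of $26$ versus $28$, so this step is not pro forma and must be done carefully (it is not a flaw relative to the paper, which is equally incomplete here, but it is the one unverified link in your chain). Second, your closing remark that ``both sides of the reduced inequality are $\Theta(n^{1/4})$'' is inconsistent with your own estimate $Q(g^2(n)) = \bigO{n^{1/8} \log_2 \log_2 n}$; only the right-hand side is $\Theta(n^{1/4})$ (the statement is true of the \emph{unreduced} inequality), though this affects only the commentary, not the logic.
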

\begin{proof}
We will first prove there is such an $N \in \Z$.  Then we will discuss why we may take $N$ to be $2,500,000$.
\paragraph*{}We need to show that eventually $P(g(n)) \leq \sqrt{2 (g(n) + f(n)+1)} + 1/2$.  From \textbf{Theorem \ref{myBounds}}, we know
\[
P(r) \leq \sqrt{2r} + \littleO(\sqrt{r}).
\]
Therefore, there exists a constant $G$ such that for all $r \geq G$, we have
\[
P(r) \leq \sqrt{2r} + \littleO(\sqrt{r}) \leq \sqrt{4r}.
\]
From this, it follows that for all $n$, if $g(n) \geq G$, then we have
\[
P(g(n)) \leq \sqrt{4g(n)} \leq \sqrt{2(g(n) + f(n) + 1)} + 1/2.
\]
\paragraph*{}Let $M$ be the maximum value taken by $P(k)$ for $0 \leq k \leq G$, and let $n \geq M^2 (M^2+1)/2$ be arbitrary.  Now if $g(n) \geq G$, then we know the claim holds.  Therefore, we can assume $g(n) < G$.  But if this is the case, then we know $P(g(n)) \leq M$, which implies
\[
P(g(n)) \leq M \leq \sqrt{f(n)} \leq \sqrt{2(g(n) + f(n)+1)}+1/2.
\]
\paragraph*{}Therefore, for all $n \geq M^2 (M^2+1)/2 =:N_P$, the first equation holds.  In the same way, we may find a constant $N_Q$ after which the second inequality holds.  Thus, taking $N := \max \{N_P, N_Q\}$ proves the existence of such an integer $N$.
\paragraph*{}Now proving that we may in fact take $N$ to be $2,500,000$, follows from somewhat lengthy but routine refinements of the previous argument.  In the above notation, the main idea is to first obtain any analytic upper bound on $G$.  This upper bound on $G$ is then refined by using computer calculated data to compare $P(r)$ with $\sqrt{4r}$ to make $G$ as small as possible.  Using this technique for both $N_P$ and $N_Q$ then proves the claim.
\end{proof}
With this proposition, we are able to prove our main result.
\begin{theorem}\label{bestResult}
Let $P(n)$ and $Q(n)$ be as given.  Then if $n \geq 0$ is not one of the $177$ known counterexamples tabulated in \textbf{Table \ref{table:exceptions}} of the appendix (in particular, for all $n > 149,894$), we have
\begin{eqnarray*}
P(n) &=& f(n) + Q(g(n)) \qquad \qquad \text{and}\\
Q(n) &=& 1 + f(n) + P(g(n)),
\end{eqnarray*}
where as before, the functions $f(n)$ and $g(n)$, given by
\[
f(n) := \left \lfloor \sqrt{2n} + 1/2 \right \rfloor = \left [ \sqrt{2n}\right ], \qquad \qquad \text{and} \qquad \qquad g(n) := \dfrac{f(n) [ f(n)+1]}{2} - n,
\]
are also the smallest nonnegative integers satisfying $[1+2+3+ \cdots + f(n)] - g(n) = n$.
\end{theorem}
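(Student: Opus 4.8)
The plan is to split the range of $n$ into a ``large $n$'' regime handled purely analytically and a finite initial segment handled by rigorous computation.

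First, for $n \geq 2{,}500{,}000$ I would squeeze the upper bounds of \textbf{Theorem \ref{goodUpperBounds}} against the lower bounds of \textbf{Theorem \ref{goodLowerBounds}}, using \textbf{Proposition \ref{eventuallyHappens}} to discard the spurious terms inside the ``$\min$''. Explicitly, \textbf{Proposition \ref{eventuallyHappens}} tells us that for every such $n$ the quantity $\min\{Q(g(n)),\ \sqrt{2(g(n)+f(n)+1)}+3/2,\ f(n)-2\}$ equals $Q(g(n))$ and $\min\{P(g(n)),\ \sqrt{2(g(n)+f(n)+1)}+1/2\}$ equals $P(g(n))$. Substituting these into \textbf{Theorem \ref{goodLowerBounds}} gives $P(n) \geq f(n)+Q(g(n))$ and $Q(n) \geq 1+f(n)+P(g(n))$, whereas \textbf{Theorem \ref{goodUpperBounds}} gives the reverse inequalities for all $n \geq 0$. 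Hence both identities hold with equality for every $n \geq 2{,}500{,}000$, with no exceptions. (The alternative description of $f(n)$ and $g(n)$ as the least nonnegative integers with $[1+\cdots+f(n)]-g(n)=n$ is already furnished by \textbf{Proposition \ref{fReps}}, so nothing further is needed on that point.)

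Second, for $0 \leq n \leq 2{,}500{,}000$ I would compute $P(n)$ and $Q(n)$ exactly for every $n$ in this range, using the recurrences of \secref{\ref{section first recurrences}} for $\phelp$, $\pinvhelp$, and $\pinvequal$ together with their boundary conditions (so that $P(n)=\min\{\phelp(n;n-1),n\}$ and $Q(n)$ is read off from \eqref{QRecHelp2}), and then test the two asserted identities term by term. By definition, the values of $n$ at which a test fails are precisely the counterexamples; the content of the theorem is the assertion that this exceptional set consists of exactly the $177$ integers listed in \textbf{Table \ref{table:exceptions}}, each of which is at most $149{,}894$, and that no $n$ with $149{,}894 < n \leq 2{,}500{,}000$ is exceptional. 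The cases $n=0,1$ are trivial by hand and are subsumed in this sweep. Combining the two parts establishes the theorem and, in particular, the identities for all $n>149{,}894$.

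The analytic half is pure bookkeeping given \textbf{Theorems \ref{goodUpperBounds}} and \textbf{\ref{goodLowerBounds}} and \textbf{Proposition \ref{eventuallyHappens}}, so the real obstacle is making the finite verification both feasible and rigorous. A naive dynamic program over all pairs $(n;k)$ with $n$ up to $2.5$ million is hopelessly slow, so one must exploit structure: that $\phelp(n;k)$ is monotone in $k$ and already equals $P(n)$ once $k$ is moderately large (cf.\ \eqref{PRecHelp} and \textbf{Lemma \ref{infinityBound}}), and --- more decisively --- that in the direct recurrences \eqref{PGoodRec}--\eqref{QGoodRec}, \textbf{Lemmas \ref{helperBound}} and \textbf{\ref{piBound}} confine the minimizing $m$ to a short window around $f(n) \approx \sqrt{2n}$, so that only a manageable table of auxiliary values must be stored and traversed. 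One must further ensure that every comparison is carried out in exact integer arithmetic --- in particular computing $f(n)$ via an integer square root and clearing any radicals by squaring, in the spirit of the proof of \textbf{Proposition \ref{fReps}} --- so that the list of $177$ exceptions is genuinely certified rather than an artifact of rounding. Carrying out this computation correctly and confirming that its output matches \textbf{Table \ref{table:exceptions}} is the crux of the argument.
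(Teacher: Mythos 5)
Your proposal matches the paper's proof: the same two-part structure of squeezing Theorem \ref{goodUpperBounds} against Theorem \ref{goodLowerBounds} via Proposition \ref{eventuallyHappens} for $n \geq 2{,}500{,}000$, and an exhaustive computer verification below that threshold to certify the $177$ exceptions. The additional remarks on making the finite search feasible are consistent with the algorithmic discussion the paper defers to its concluding section.
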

\begin{proof}
If $n \geq 2,500,000$, then the result follows by using the previous proposition to simplify the lower bounds of \textbf{Theorem \ref{goodLowerBounds}} and comparing these to the upper bounds in \textbf{Theorem \ref{goodUpperBounds}}.
\paragraph*{}On the other hand, if $0 \leq n < 2,500,000$, then the result holds by performing an exhaustive computer seach for counterexamples\footnote{A brief discussion of the algorithms used for this search is provided in \secref{\ref{section conclusion}}.  Code is available on request.}.  There are only $177$ counterexamples in this range, as tabulated in \textbf{Table \ref{table:exceptions}} of the appendix.  In particular, if $n > 149,894$, then the claim holds since $149,894$ is the largest counterexample.
\end{proof}

\subsection*{Corollaries and Remarks}
There are many interesting implications of \textbf{Theorem \ref{bestResult}}; from this result, many things can be discovered about the behavior of $P(n)$ and $Q(n)$, and the intimate connection between these two functions is made evident.  Although these results can be formulated simply as algebraic statements about the recurrence relations, the corresponding geometric statements about the graphs of these functions is perhaps more enlightening.
\begin{figure}[htb]
\centering
\includegraphics[width=\textwidth]{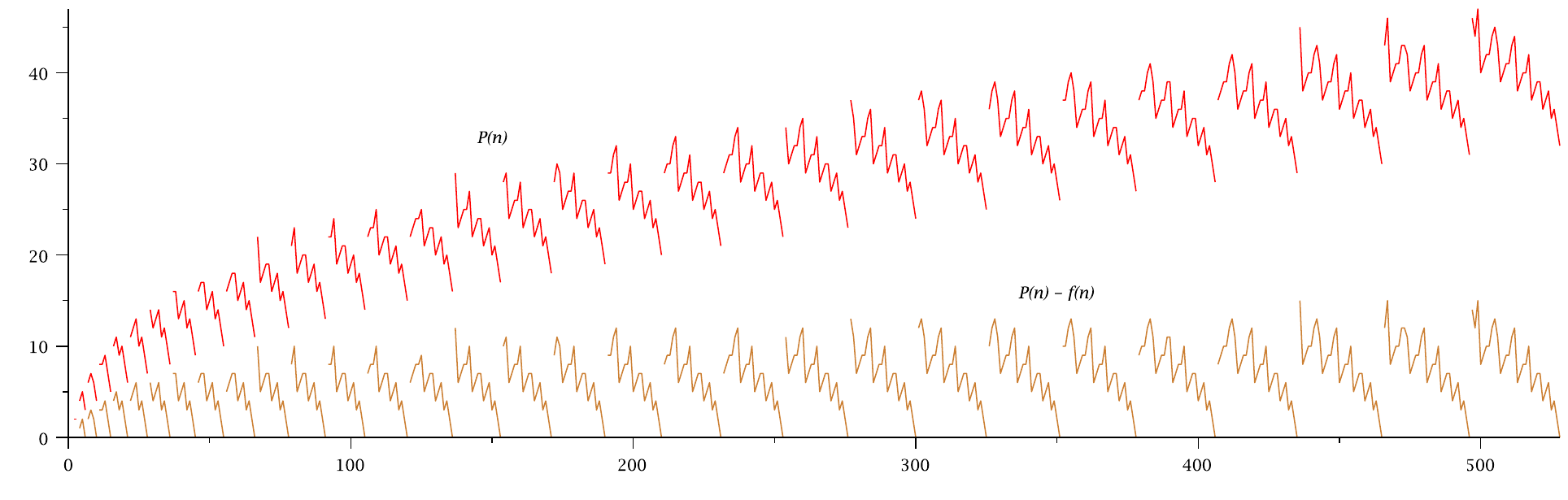}
\caption{Graph of $P(n)$ (\emph{red}) and $P(n) - f(n) = P(n) - [\sqrt{2n}]$ (\emph{brown})}
\label{fig:p_and_p_minus_f}
\end{figure}
\paragraph*{}Examining \textbf{Figures \ref{fig:p_and_p_minus_f}} and \textbf{\ref{fig:q_and_q_minus_f_minus_1}} suggests several apparent patterns of the graphs of these functions.  For example, we see that the graphs $P(n)$ and $Q(n)$ are each ``drifting" upwards by a translation of $f(n)$.  After compensating for this drift, the patterns in the graphs become more apparent.
\begin{figure}[htb]
\centering
\includegraphics[width=\textwidth]{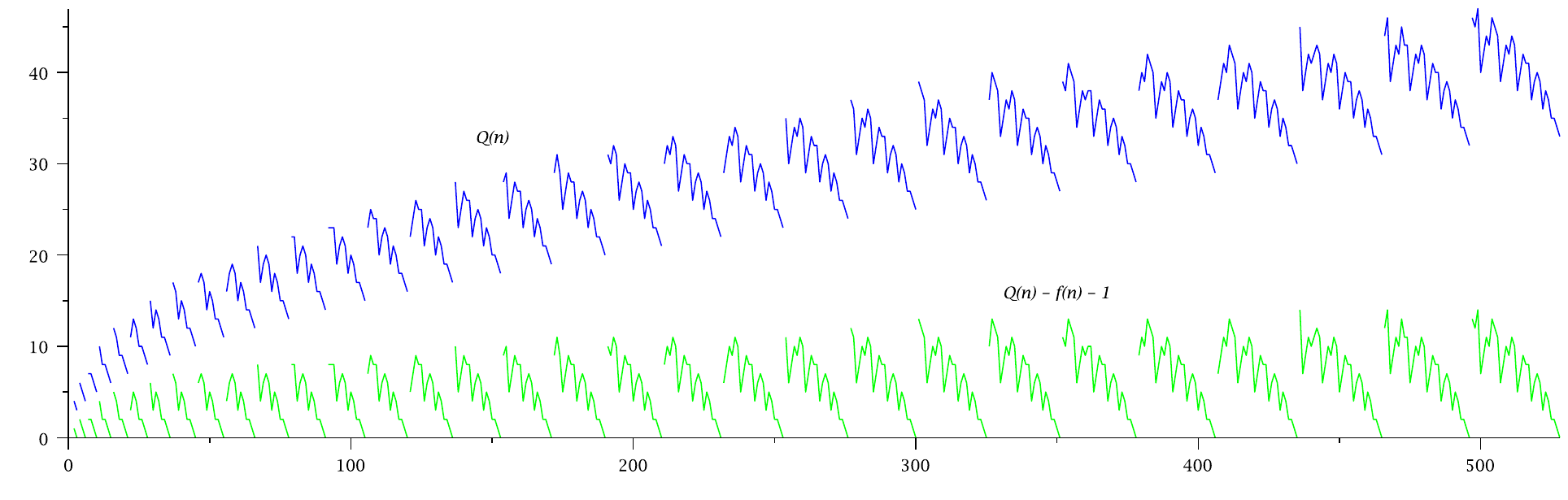}
\caption{Graph of $Q(n)$ (blue) and $Q(n) - f(n) - 1 = Q(n) - [\sqrt{2n}] - 1$ (green)}
\label{fig:q_and_q_minus_f_minus_1}
\end{figure}
\paragraph*{}Now the curves $P(n) - f(n)$ and $Q(n) - f(n) - 1$ (shown in brown and green respectively) appear to be almost ``periodic" in a sense, with zeroes at $0, 1, 3, 6, 10, \ldots$.  This apparent behavior is even more pronounced when the values of these functions are laid out in the following triangular array
\[
\begin{tabular}{cccccc}
\multicolumn{5}{c}{\text{$\{a_{n}\}_{n=0} ^{\infty}$}}\\
 & & & & $a_0$\\
 & & & & $a_1$\\
 & & & $a_2$ & $a_3$\\
 & & $a_4$ & $a_5$ & $a_6$\\
 & $a_7$ & $a_8$ & $a_9$ & $a_{10}$\\
 $a_{11}$ & $a_{12}$ & $a_{13}$ & $a_{14}$ & $a_{15}$\\
 \vdots & \vdots & \vdots & \vdots & \vdots\\
\end{tabular}, \qquad \text{which yields for example} \qquad
\begin{tabular}{cccccc}
\multicolumn{5}{c}{\text{$\{(f(n), g(n))\}_{n=0} ^{\infty}$}}\\
 & & & & $(0,0)$\\
 & & & & $(1,0)$\\
 & & & $(2,1)$ & $(2,0)$\\
 & & $(3,2)$ & $(3,1)$ & $(3,0)$\\
 & $(4,3)$ & $(4,2)$ & $(4,1)$ & $(4,0)$\\
 $(5,4)$ & $(5,3)$ & $(5,2)$ & $(5,1)$ & $(5,0)$\\
 \vdots & \vdots & \vdots & \vdots & \vdots\\
\end{tabular}.
\]
\paragraph*{}Then arranging values in this triangular manner, we have
\[
\begin{tabular}{cccccccccc}
\multicolumn{10}{c}{\text{$\{P(n) - f(n)\}_{n=0} ^{\infty}$}}\\
& & & & & & & & & 0\\
& & & & & & & & & 0\\
& & & & & & & & 0 & 0\\
& & & & & & & 1 & 2 & 0\\
& & & & & & 2 & 3 & 2 & 0\\
& & & & & 3 & 3 & 4 & 2 & 0\\
& & & & 4 & 5 & 3 & 4 & 2 & 0\\
& & & 4 & 5 & 6 & 3 & 4 & 2 & 0\\
& & 6 & 4 & 5 & 6 & 3 & 4 & 2 & 0\\
& 7 & 7 & 4 & 5 & 6 & 3 & 4 & 2 & 0\\
6 & 7 & 7 & 4 & 5 & 6 & 3 & 4 & 2 & 0
\end{tabular}\qquad \qquad \qquad
\begin{tabular}{cccccccccc}
\multicolumn{10}{c}{\text{$\{Q(n) - f(n) - 1\}_{n=0} ^{\infty}$}}\\
& & & & & & & & & -1\\
& & & & & & & & & 0\\
& & & & & & & & 1 & 0\\
& & & & & & & 2 & 1 & 0\\
& & & & & & 2 & 2 & 1 & 0\\
& & & & & 4 & 2 & 2 & 1 & 0\\
& & & & 5 & 4 & 2 & 2 & 1 & 0\\
& & & 3 & 5 & 4 & 2 & 2 & 1 & 0\\
& & 6 & 3 & 5 & 4 & 2 & 2 & 1 & 0\\
& 7 & 6 & 3 & 5 & 4 & 2 & 2 & 1 & 0\\
6 & 7 & 6 & 3 & 5 & 4 & 2 & 2 & 1 & 0
\end{tabular}.
\]
Then it appears that the rows (read from right to left) of $\{P(n) - f(n)\}$ `approach' $0, 2, 4, 3, 6, 5, 4, 7, 7, 6, \ldots$, and the rows of $\{Q(n) - f(n) - 1\}$ `approach' $0, 1, 2, 2, 4, 5, 3, 6, 7, 6, \ldots$.  Moreover, these two sequences seem to be just $\{Q(n)\}$ and $\{P(n)\}$, respectively.  In fact, this follows as our first corollary of \textbf{Theorem \ref{bestResult}}:
\begin{corollary}
Let $\{P(n) - f(n)\}_{n=0} ^{\infty}$ and $\{Q(n) - f(n) - 1\}_{n=0} ^{\infty}$ be arranged in the triangular manner previously discussed.  Then unless $n$ is one of the 177 counterexamples in \textbf{Table \ref{table:exceptions}} of the appendix, reading the rows of $\{P(n) - f(n)\}$ from to right to left exactly agrees with $Q(t)$, and reading the rows of $\{Q(n) - f(n) -1\}$ exactly agrees with $P(t)$.
\end{corollary}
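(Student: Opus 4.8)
The plan is to observe that this corollary is, once the indexing of the triangular array is made explicit, nothing more than a bookkeeping restatement of \textbf{Theorem \ref{bestResult}}. First I would pin down the layout: reading the array from the top, the $r$-th row (for $r \geq 2$) consists of the entries $a_n$ for $n = \binom{r-1}{2}+1, \binom{r-1}{2}+2, \ldots, \binom{r}{2}$, while the top row ($r=1$) is just $a_0$. By the defining property of $f$ and $g$ — namely that they are the unique nonnegative integers with $n = [0+1+\cdots+f(n)] - g(n)$ and $0 \le g(n) < f(n)$ — every index $n$ in the $r$-th row has $f(n) = r-1$, and $g(n)$ runs through the values $r-2, r-3, \ldots, 1, 0$ as one moves left to right along the row. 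Equivalently, reading the $r$-th row \emph{from right to left}, its $j$-th entry (for $j = 0, 1, \ldots, r-2$) is $a_n$ for the unique $n$ with $f(n) = r-1$ and $g(n) = j$.

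Next I would substitute the two identities of \textbf{Theorem \ref{bestResult}}. Taking $a_n = P(n) - f(n)$, then for every $n$ not among the $177$ exceptional values we have $a_n = Q(g(n))$, so the $r$-th row read from right to left is exactly $Q(0), Q(1), \ldots, Q(r-2)$. Since this holds for each $r$, the successive rows (read right to left) are precisely the successive initial segments of the sequence $\{Q(t)\}_{t \geq 0}$, which is the asserted agreement. The claim for $\{Q(n) - f(n) - 1\}$ is handled identically: away from the exceptions \textbf{Theorem \ref{bestResult}} gives $Q(n) - f(n) - 1 = P(g(n))$, so the $r$-th row read from right to left is $P(0), P(1), \ldots, P(r-2)$, matching $\{P(t)\}_{t \geq 0}$.

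The only delicate points — and they are exactly why the statement carries the exception clause — are the $177$ tabulated values of $n$, at which the equalities of \textbf{Theorem \ref{bestResult}} may fail, and the degenerate top of the triangle (rows $r = 1, 2$, where one invokes the explicit convention $f(0) = g(0) = 0$). Neither is a genuine obstacle: the former is absorbed into the exception clause already present in the corollary, and the latter is verified directly against the small triangular tables displayed above. So I do not expect any hard step here; the corollary follows immediately from \textbf{Theorem \ref{bestResult}} together with the elementary description of the triangular indexing, and the only real ``work'' is expressing the right-to-left reading of a row in terms of $g(n)$.
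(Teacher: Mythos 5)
Your proposal is correct and matches the paper's argument, which likewise treats the corollary as an immediate consequence of \textbf{Theorem \ref{bestResult}} together with the construction of the triangular array (the paper simply omits the explicit bookkeeping of row indices that you spell out). Your identification of the $r$-th row with the indices $n$ having $f(n)=r-1$ and $g(n)$ decreasing left to right is exactly the intended reading.
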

\begin{proof}
This follows immediately from \textbf{Theorem \ref{bestResult}} by how the triangular array was constructed.
\end{proof}
\paragraph*{}Formulating this as a geometric statement is to say that except for 177 particular points, each ``lump" in the graphs of $P(n) - f(n)$ and $Q(n) - f(n) - 1$ is simply a reflection of a partial copy of $Q(n)$ or $P(n)$, respectively.  Thus, the graph of $P(n)$ eventually consists solely of ``shifted" and reflected partial copies of $Q(n)$, and similarly the graph of $Q(n)$ eventually consists solely of ``shifted" and reflected partial copies of $P(n)$.  This mutual similarity of the two functions also induces self-similarity as shown in the following results.
\begin{corollary}
If $g(n) < f(n) - 1$, and if $n$ and $n-f(n)$ are not one of the 177 values in \textbf{Table \ref{table:exceptions}},
\begin{eqnarray*}
P(n) &=& 1 + P(n-f(n)) \qquad \qquad \text{and}\\
Q(n) &=& 1 + Q(n-f(n)).
\end{eqnarray*}
\end{corollary}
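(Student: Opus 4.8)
The plan is to reduce the statement to \textbf{Theorem \ref{bestResult}} by analyzing how the functions $f$ and $g$ transform under the shift $n \mapsto n - f(n)$. Set $m := n - f(n)$. Using the defining identity $n = [1 + 2 + \cdots + f(n)] - g(n)$ together with the elementary fact $[1 + 2 + \cdots + k] - k = [1 + 2 + \cdots + (k-1)]$, one computes
\[
m = n - f(n) = [1 + 2 + \cdots + f(n)] - g(n) - f(n) = [1 + 2 + \cdots + (f(n) - 1)] - g(n).
\]
The hypothesis $g(n) < f(n) - 1$ is exactly the condition that makes this last expression the \emph{canonical} representation of $m$ in the sense of the uniqueness lemma of \secref{\ref{section analysis of recurrences}} (equivalently, of the smallest-integer characterization recalled in \textbf{Theorem \ref{bestResult}}), since it asserts precisely $0 \le g(n) < f(n) - 1$. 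Note moreover that $g(n) < f(n) - 1$ forces $f(n) \ge 2$, hence $m \ge 1$, so the uniqueness statement genuinely applies to $m$. By uniqueness we conclude
\[
f(n - f(n)) = f(n) - 1 \qquad \text{and} \qquad g(n - f(n)) = g(n).
\]

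With this identity in hand, the corollary follows immediately. By hypothesis neither $n$ nor $n - f(n)$ lies in \textbf{Table \ref{table:exceptions}}, so \textbf{Theorem \ref{bestResult}} applies to both arguments:
\[
P(n) = f(n) + Q(g(n)), \qquad P(n - f(n)) = f(n - f(n)) + Q(g(n - f(n))) = (f(n) - 1) + Q(g(n)).
\]
Subtracting the second equation from the first gives $P(n) - P(n - f(n)) = 1$. The identity $Q(n) = 1 + Q(n - f(n))$ is obtained the same way from $Q(n) = 1 + f(n) + P(g(n))$ and $Q(n - f(n)) = 1 + (f(n) - 1) + P(g(n))$.

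There is essentially no hard content here; the one point that deserves care is the first step. One must verify that the hypothesis $g(n) < f(n) - 1$ is exactly what keeps the rewritten expression for $m$ a legal canonical representation: if instead $g(n) = f(n) - 1$, the rewrite collapses to $m = [1 + 2 + \cdots + (f(n) - 2)]$, so $f(m) = f(n) - 2$ and the ``drift'' would jump by $2$ rather than $1$. One should also confirm that the degenerate situations do not arise under this hypothesis — in particular that $f(n) \ge 2$ and $m \ge 1$, so the uniqueness lemma is invoked legitimately, and that both non-exceptionality assumptions (on $n$ and on $n - f(n)$) are actually used. I expect this small amount of edge-case bookkeeping to be the only subtlety.
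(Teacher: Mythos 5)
Your proposal is correct and follows essentially the same route as the paper: the paper's own (one-line) proof cites Theorem \ref{bestResult} together with the fact that $g(n) \neq f(n)-1$ implies $g(n-f(n)) = g(n)$, which is exactly the identity you establish via the canonical representation $n - f(n) = [1+2+\cdots+(f(n)-1)] - g(n)$ and the uniqueness lemma. Your write-up simply supplies the details (including $f(n-f(n)) = f(n)-1$ and the edge-case checks) that the paper leaves implicit.
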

\begin{proof}
This follows from \textbf{Theorem \ref{bestResult}} and the fact that if $g(n) \neq f(n)-1$, then $g(n) = g(n-f(n))$.
\end{proof}
This corollary is the statement that with a finite number of exceptions, unless $n$ is one of the values at the far left of a row, then the value for $n$ in the triangle for $\{P(n)\}_{n=0} ^{\infty}$ (or in $\{Q(n)\}_{n=0} ^{\infty}$) is simply one more than the value directly above that entry in the triangle.

\begin{corollary}
If $n$ and $g(n)$ are not one of the 177 values listed in \textbf{Table \ref{table:exceptions}} of the appendix (and in particular, if $g(n) > 149,894$), then we have
\begin{eqnarray*}
P(n) &=& 1 + f(n) + f(g(n)) + P(g^2(n)) \qquad \qquad \text{and}\\
Q(n) &=& 1 + f(n) + f(g(n)) + Q(g^2(n)).
\end{eqnarray*}
\end{corollary}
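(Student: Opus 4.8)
The plan is simply to iterate \textbf{Theorem \ref{bestResult}}. Write $T$ for the set of $177$ exceptional values tabulated in \textbf{Table \ref{table:exceptions}}; the hypothesis of the corollary is that $n \notin T$ and $g(n) \notin T$. First I would dispose of the ``in particular'' clause: since $g(n) < f(n)$ always holds, the inequality $g(n) > 149{,}894$ forces $f(n) > 149{,}894$, hence $n = [1 + 2 + \cdots + f(n)] - g(n) \geq [1 + 2 + \cdots + f(n)] - f(n)$ is vastly larger than $149{,}894$; as every element of $T$ is at most $149{,}894$, both $n$ and $g(n)$ then lie outside $T$, so the two displayed hypotheses are met.

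Next, assuming $n \notin T$, \textbf{Theorem \ref{bestResult}} gives $P(n) = f(n) + Q(g(n))$. Since also $g(n) \notin T$, I apply the theorem a second time, now with argument $g(n)$, to get $Q(g(n)) = 1 + f(g(n)) + P(g(g(n)))$. Substituting and using $g^2(n) = g(g(n))$ yields $P(n) = 1 + f(n) + f(g(n)) + P(g^2(n))$. The identity for $Q(n)$ is obtained symmetrically: the theorem at $n$ gives $Q(n) = 1 + f(n) + P(g(n))$, and the theorem at $g(n)$ gives $P(g(n)) = f(g(n)) + Q(g(g(n)))$, so $Q(n) = 1 + f(n) + f(g(n)) + Q(g^2(n))$.

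There is essentially no obstacle here — this is a two-step unfolding of the main theorem. The only point requiring care is that each invocation of \textbf{Theorem \ref{bestResult}} demands that its argument avoid $T$, which is precisely why both exclusions $n \notin T$ and $g(n) \notin T$ are listed as hypotheses: excluding $n$ alone would not suffice, since $g(n)$ could still land on one of the $177$ exceptional values even when $n$ does not.
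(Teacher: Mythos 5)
Your proposal is correct and matches the paper's proof, which is exactly the same two-fold application of \textbf{Theorem \ref{bestResult}} (first at $n$, then at $g(n)$, and symmetrically for $Q$); the paper states this in one line, while you also correctly justify the parenthetical claim that $g(n) > 149{,}894$ forces both $n$ and $g(n)$ outside the exceptional set.
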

\begin{proof}
This follows immediately by applying \textbf{Theorem \ref{bestResult}} twice.
\end{proof}

\paragraph*{}This last recurrence is readily `solved' yielding the following quasi-explicit equations.
\begin{proposition}\label{almostExplicit}
For all $n \geq 0$, let $\phi(n;149,894) = \phi(n)$ denote the smallest nonnegative integer satisfying $g^{\phi(n)}(n) \leq 149,894$.  Then for all $n \geq 0$, we have
\begin{eqnarray*}
P(n) &=& \begin{cases} P(g^{\phi(n)}(n)) + \sum_{i=1}^{\phi(n)} f(g^{i-1}(n)) + \phi(n)/2 \qquad &\text{if $\phi(n)$ is even}\\ Q(g^{\phi(n)}(n)) + \sum_{i=1}^{\phi(n)} f(g^{i-1}(n)) +[\phi(n)-1]/2 \qquad &\text{if $\phi(n)$ is odd,}\end{cases} \qquad \qquad \text{and}\\
Q(n) &=& \begin{cases} Q(g^{\phi(n)}(n)) + \sum_{i=1}^{\phi(n)} f(g^{i-1}(n)) + \phi(n)/2 \qquad &\text{if $\phi(n)$ is even}\\ P(g^{\phi(n)}(n)) + \sum_{i=1}^{\phi(n)} f(g^{i-1}(n)) +[\phi(n)+1]/2 \qquad &\text{if $\phi(n)$ is odd.}\end{cases} 
\end{eqnarray*}
\end{proposition}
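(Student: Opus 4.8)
The plan is to prove both identities simultaneously (they are mutually recursive) by induction on $\phi(n)$, with \textbf{Theorem \ref{bestResult}} providing the inductive step. First I would dispose of two housekeeping points. By \textbf{Proposition \ref{gBound}} we have $g^{L}(n) \leq 2(n/2)^{1/2^{L}}$, which tends to $2$ as $L \to \infty$, so $\phi(n)$ is a well-defined nonnegative integer for every $n \geq 0$. Moreover $\phi(n) = 0$ holds precisely when $n = g^{0}(n) \leq 149{,}894$, so $\phi(n) \geq 1$ if and only if $n > 149{,}894$; in particular, whenever $\phi(n) \geq 1$ the number $n$ exceeds $149{,}894$ and hence is not one of the $177$ counterexamples of \textbf{Table \ref{table:exceptions}}, so \textbf{Theorem \ref{bestResult}} applies to it with no exception to worry about.

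The base case $\phi(n) = 0$ is immediate: the sum $\sum_{i=1}^{0} f(g^{i-1}(n))$ is empty, $\phi(n)/2 = 0$, and $g^{\phi(n)}(n) = n$, so both asserted formulas (which use the "even" branch) collapse to $P(n) = P(n)$ and $Q(n) = Q(n)$.

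For the inductive step, fix $k \geq 1$, assume both formulas hold for every $m$ with $\phi(m) \leq k-1$, and let $n$ satisfy $\phi(n) = k$. A direct check from the definition of $\phi$ shows $\phi(g(n)) = k-1$: on one hand $g^{k-1}(g(n)) = g^{k}(n) \leq 149{,}894$ gives $\phi(g(n)) \leq k-1$, and on the other hand $\phi(g(n)) = j < k-1$ would give $g^{j+1}(n) = g^{j}(g(n)) \leq 149{,}894$ with $j+1 < k$, contradicting the minimality of $k = \phi(n)$. Since $n > 149{,}894$, \textbf{Theorem \ref{bestResult}} gives $P(n) = f(n) + Q(g(n))$ and $Q(n) = 1 + f(n) + P(g(n))$. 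Now I would expand $Q(g(n))$ and $P(g(n))$ by the induction hypothesis applied at $g(n)$, whose iterate-count is $k-1$, splitting into the two cases according to the parity of $k$ (note $k$ even forces the hypothesis to be invoked in its "odd" branch and vice versa). In each of the four resulting computations the remaining steps are routine: reindex $\sum_{i=1}^{k-1} f\big(g^{i-1}(g(n))\big) = \sum_{i=2}^{k} f\big(g^{i-1}(n)\big)$, absorb the leading $f(n)$ to upgrade this to $\sum_{i=1}^{k} f\big(g^{i-1}(n)\big)$, and collect the constants using $\tfrac{(k-1)+1}{2} = \tfrac{k}{2}$ (case $P$, $k$ even), $\tfrac{k-1}{2}$ unchanged (case $P$, $k$ odd), $1 + \tfrac{(k-1)-1}{2} = \tfrac{k}{2}$ (case $Q$, $k$ even), and $1 + \tfrac{k-1}{2} = \tfrac{k+1}{2}$ (case $Q$, $k$ odd). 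This reproduces exactly the claimed constants $\phi(n)/2$, $[\phi(n)-1]/2$, and $[\phi(n)+1]/2$ for each function and each parity of $\phi(n) = k$, closing the induction.

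I do not expect a conceptual obstacle; the entire content is iteration of \textbf{Theorem \ref{bestResult}}. The only thing that demands care is the bookkeeping in the inductive step: keeping the shift in the summation index synchronized with the drop of $\phi$ by one, and tracking the extra $+1$ from the recurrence for $Q(n)$ through both parity cases so that the half-integer constants come out precisely as stated.
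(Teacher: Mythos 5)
Your proof is correct and takes essentially the same route as the paper, which simply unrolls the recurrences of \textbf{Theorem \ref{bestResult}} until the argument drops to at most $149{,}894$ (the paper phrases this as ``follows easily from the previous corollary''). Your induction on $\phi(n)$, including the observation that $\phi(n)\geq 1$ iff $n>149{,}894$ so the theorem always applies without exception, and the parity bookkeeping for the constants, checks out exactly.
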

\begin{proof}
This follows easily from the previous corollary.  Although the function $\phi(n)$ is much too elusive for most honest mathematicians to call these equations truly ``explicit", they ought not be considered recursive.  This is because even though $P$ and $Q$ are referenced on the right-hand side, their arguments are bounded; therefore, by appealing to \textbf{Table \ref{table:exceptions}}, those terms are effectively known.
\end{proof}
This gives rise to the following, perhaps surprising fact:
\begin{corollary}\label{QPDiff}
Let $P(n)$ and $Q(n)$ be as given.  Then for all $n \geq 0$, we have
\[
-1 \leq Q(n) - P(n) \leq 2.
\]
\end{corollary}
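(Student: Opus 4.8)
The plan is to leverage the quasi-explicit formulas of \textbf{Proposition \ref{almostExplicit}}, which express both $P(n)$ and $Q(n)$ in terms of the \emph{same} quantity $\sum_{i=1}^{\phi(n)} f(g^{i-1}(n))$ plus a ``base case'' term $P$ or $Q$ evaluated at the small argument $g^{\phi(n)}(n) \leq 149{,}894$, plus an explicit additive constant depending only on the parity of $\phi(n)$. Subtracting the two formulas, the bulky summation cancels entirely, and we are left with $Q(n) - P(n)$ being a difference of two ``base values'' plus a small explicit integer. Concretely, when $\phi(n)$ is even, $Q(n) - P(n) = Q(g^{\phi(n)}(n)) - P(g^{\phi(n)}(n))$; when $\phi(n)$ is odd, $Q(n) - P(n) = P(g^{\phi(n)}(n)) - Q(g^{\phi(n)}(n)) + 1$. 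So everything reduces to controlling $Q(r) - P(r)$ for $r$ in the finite range $0 \leq r \leq 149{,}894$.

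The key steps, in order, are as follows. First I would write down the two formulas from \textbf{Proposition \ref{almostExplicit}} and perform the subtraction in each parity case, recording the two displayed identities above for $Q(n)-P(n)$ in terms of $r := g^{\phi(n)}(n)$. Second, I would establish the bound $-1 \leq Q(r) - P(r) \leq 2$ for all $r$ in the base range $0 \leq r \leq 149{,}894$; since this is a finite check, it is covered by the same exhaustive computer search already invoked in the proof of \textbf{Theorem \ref{bestResult}} (indeed one may simply tabulate $Q(r)-P(r)$ over this range). Third, I would combine: in the even case, $Q(n)-P(n) = Q(r)-P(r) \in [-1,2]$ directly; in the odd case, $Q(n)-P(n) = -(Q(r)-P(r)) + 1 \in [-2+1, 1+1] = [-1, 2]$. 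Either way the bound $-1 \leq Q(n) - P(n) \leq 2$ holds, as claimed. Finally, I would note that the remaining indices $n$ — the $177$ values not covered by \textbf{Theorem \ref{bestResult}} and hence potentially not covered by \textbf{Proposition \ref{almostExplicit}} — are themselves a finite set and are verified directly in \textbf{Table \ref{table:exceptions}}, so no exceptions escape.

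The main obstacle is really a bookkeeping one rather than a conceptual one: one must make sure that \textbf{Proposition \ref{almostExplicit}} genuinely applies to \emph{every} $n \geq 0$ (the proposition is stated ``for all $n \geq 0$'', so this is fine, but one should double-check that the $177$ exceptional indices of \textbf{Theorem \ref{bestResult}} do not secretly break the iterated application of the corollary preceding \textbf{Proposition \ref{almostExplicit}}, or else handle them separately by the table). The only genuinely substantive input is the finite verification that $Q(r)-P(r) \in \{-1,0,1,2\}$ on the base range, which is not something one proves by hand but which is already part of the paper's computational apparatus; once that is granted, the corollary falls out of pure cancellation and a parity sign-flip. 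I expect the write-up to be short: set up the subtraction, invoke the finite check, split on parity, done.
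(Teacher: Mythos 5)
Your proposal is correct and follows essentially the same route as the paper: subtract the two formulas of \textbf{Proposition \ref{almostExplicit}} so the common summation cancels, split on the parity of $\phi(n)$, and reduce to a finite check of $Q(r)-P(r)$ at the base argument $r = g^{\phi(n)}(n)$. The only (minor) difference is in that finite check: you verify $-1 \leq Q(r)-P(r) \leq 2$ over the whole range $0 \leq r \leq 149{,}894$ by computer, whereas the paper observes that one may keep applying \textbf{Theorem \ref{bestResult}} until the argument lands on one of the $177$ tabulated exceptions, where the table directly shows $0 \leq Q(k)-P(k) \leq 2$, so only those $177$ values need inspection.
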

\begin{proof}
For all $n \geq 0$, we can appeal to \textbf{Proposition \ref{almostExplicit}} to obtain that
\[
Q(n) - P(n) = \begin{cases} Q(g^{\phi(n)}(n)) - P(g^{\phi(n)}(n)), \qquad &\text{if $\phi(n)$ is even},\\ P(g^{\phi(n)}(n)) - Q(g^{\phi(n)}(n)) + 1, \qquad &\text{if $\phi(n)$ is odd.}\end{cases}
\]
Moreover, for our purposes, we can assume that $\phi(n)$ is one of the 177 counterexamples tabulated in \textbf{Table \ref{table:exceptions}} or else we could continue to appeal to \textbf{Theorem \ref{bestResult}} until this is the case.  But looking at a table of these 177 values, we see that if $k$ is one of those exceptions, then $0 \leq Q(k) - P(k) \leq 2$, which completes the proof.
\end{proof}

\section{Conclusion} \label{section conclusion}
We conclude by discussing applications for computing $P(n)$ and $Q(n)$ and by listing some open questions.
\subsection*{``Sufficiently Large" and Computer Algorithms}
In \textbf{Proposition \ref{eventuallyHappens}}, we state results that hold for all sufficiently large values of $n$ (in particular, for all $n \geq 2,500,000$).  We then use this result to prove \textbf{Theorem \ref{bestResult}}, and we use a computer aided search to completely classify all counterexamples, which brings up a brief discussion of algorithms.
\paragraph*{}The most na\"ive approach to compute $P(n)$ would be simply to list all sets of volume $n$ and find which has the smallest perimeter.  This would require roughly $\bigO{2^n}$ time and $\bigO{n}$ memory, which is much too slow for large $n$, and a different approach is needed.
\paragraph*{}Using the recurrence relations in \secref{\ref{section first recurrences}}, dynamic programming enables us to design algorithms for computing $P(n)$ and $Q(n)$ taking $\bigO{n^2 f(n)} = \bigO{n^{2.5}}$ time and using $\bigO{n^2}$ memory.  We can reduce this memory requirement to roughly $\bigO{n}$ by employing a custom data structure, which benefits from the fact that for fixed $n$, functions such as $p(n;k)$ seem to take very few distinct values.  Using these algorithms, the author was able to check all values of $P(n)$ and $Q(n)$ for $n \leq 3,500,000$, which is more than enough to obtain the results of \textbf{Theorem \ref{bestResult}}.

\paragraph*{}Now that we have proven the recurrences in \textbf{Theorem \ref{bestResult}} and \textbf{Proposition \ref{almostExplicit}}, we may use these to compute $P(n)$ or $Q(n)$ in $\bigO{\Phi(n)} \leq \bigO{\log_{2} \log_{2} (n/2)}$ time using no additional memory.  Moreover, we can compute a list of $P(0), P(1), \ldots , P(n)$ [or $Q(0), Q(1), \ldots , Q(n)$] in $\bigO{n}$ time using the required $\bigO{n}$ memory.
\paragraph*{}Thus, one can now simply use \textbf{Theorem \ref{bestResult}} and the 177 values in \textbf{Table \ref{table:exceptions}} to compute $P(n)$ and $Q(n)$ extremely quickly, and $P(n)$ and $Q(n)$ can be tabulated essentially as far out as desired.  The author is more than willing to provide anyone interested with code and calculated results.

\subsection*{Open Questions}
There are several possible areas of future research.  Because the function $P(n)$ was first introduced so recently, this paper serves as a comprehensive overview of all that is known.
\begin{itemize}
\item[--] Little is known about the behavior of the functions $\phelp(n;k)$, $\pinvhelp(n;k)$, and $\pinvequal(n;k)$.
\item[--] It appears that for any fixed $n \leq 100,000$ the function $\phelp(n;k)$ takes at most two finite values as $k$ varies.  This may be interesting and might be proveable by focusing on \textbf{Proposition \ref{eventuallyHappens}}.
\item[--] Very little or nothing whatsoever is known about $\phi(n;N)$ from \textbf{Proposition \ref{almostExplicit}}.
\item[--] Characterizing sets for which $P(n)$ is obtained may be interesting.  It seems likely that the partitions used and the code developed in this paper would help with that.  Moreover, the result of \textbf{Theorem \ref{bestResult}} seems likely to help with this.
\item[--] Providing more direct (i.e., less analytic) proofs for these results would likely be quite enlightening.
\item[--] There seems to be no pattern or unifying properties for the 177 counterexamples tabulated in \textbf{Table \ref{table:exceptions}}.  Alternate proofs of the main results may shed light on these seemingly sporadic values.
\end{itemize}
\bibliography{mybib}

\newpage
\section{Appendix}
The 177 counterexamples to \textbf{Theorem \ref{bestResult}} are tabulated below.  Entries of the form \uc{123} are not actually counterexamples to the theorem, and they are included here only for completeness.
\begin{table}[ht!]
\centering
\begin{tabular}[c]{|c|c|c|}
\hline
\textbf{n}	 & 	\textbf{P(n)}	 & 	\textbf{Q(n)}	\\
\hline
0	 & 	0	 & 	0	\\
2	 & 	2	 & 	\uc{4}	\\
4	 & 	4	 & 	\uc{6}	\\
7	 & 	6	 & 	\uc{7}	\\
8	 & 	7	 & 	\uc{7}	\\\hline
11	 & 	8	 & 	\uc{10}	\\
16	 & 	10	 & 	\uc{12}	\\
17	 & 	11	 & 	\uc{11}	\\
29	 & 	14	 & 	\uc{15}	\\
92	 & 	\uc{22}	 & 	23	\\\hline
125	 & 	25	 & 	\uc{25}	\\
154	 & 	28	 & 	28	\\
155	 & 	29	 & 	\uc{29}	\\
174	 & 	29	 & 	29	\\
361	 & 	\uc{38}	 & 	38	\\\hline
390	 & 	39	 & 	\uc{39}	\\
441	 & 	\uc{42}	 & 	42	\\
473	 & 	43	 & 	43	\\
529	 & 	\uc{46}	 & 	46	\\
564	 & 	47	 & 	47	\\\hline
601	 & 	49	 & 	\uc{50}	\\
637	 & 	49	 & 	49	\\
704	 & 	54	 & 	\uc{55}	\\
742	 & 	53	 & 	53	\\
743	 & 	54	 & 	55	\\\hline
783	 & 	54	 & 	54	\\
837	 & 	\uc{53}	 & 	54	\\
1003	 & 	\uc{58}	 & 	59	\\
1147	 & 	62	 & 	62	\\
1184	 & 	\uc{63}	 & 	64	\\\hline
1340	 & 	67	 & 	67	\\
1341	 & 	68	 & 	\uc{69}	\\
1380	 & 	\uc{68}	 & 	69	\\
1394	 & 	68	 & 	68	\\
1548	 & 	72	 & 	72	\\\hline
1549	 & 	73	 & 	\uc{74}	\\
1606	 & 	73	 & 	73	\\
1665	 & 	74	 & 	74	\\
1771	 & 	77	 & 	77	\\
1772	 & 	78	 & 	\uc{79}	\\\hline
1833	 & 	78	 & 	78	\\
1896	 & 	79	 & 	79	\\
2173	 & 	\uc{82}	 & 	82	\\
2241	 & 	83	 & 	83	\\
2279	 & 	86	 & 	86	\\\hline
\end{tabular}
\begin{tabular}[c]{|c|c|c|}
\hline
\textbf{n}	 & 	\textbf{P(n)}	 & 	\textbf{Q(n)}	\\
\hline
2508	 & 	\uc{88}	 & 	88	\\
2581	 & 	89	 & 	89	\\
2867	 & 	\uc{94}	 & 	94	\\
2945	 & 	95	 & 	95	\\
3250	 & 	\uc{100}	 & 	100	\\\hline
3333	 & 	101	 & 	101	\\
3336	 & 	\uc{103}	 & 	104	\\
3503	 & 	104	 & 	105	\\
3588	 & 	104	 & 	104	\\
3657	 & 	\uc{106}	 & 	106	\\\hline
3745	 & 	107	 & 	107	\\
3748	 & 	\uc{109}	 & 	110	\\
3925	 & 	110	 & 	111	\\
4015	 & 	110	 & 	110	\\
4016	 & 	111	 & 	\uc{112}	\\\hline
4107	 & 	111	 & 	111	\\
4466	 & 	116	 & 	116	\\
4467	 & 	117	 & 	\uc{118}	\\
4563	 & 	117	 & 	117	\\
4564	 & 	118	 & 	\uc{119}	\\\hline
4661	 & 	118	 & 	118	\\
5186	 & 	\uc{123}	 & 	124	\\
5289	 & 	123	 & 	123	\\
5806	 & 	\uc{130}	 & 	131	\\
5915	 & 	130	 & 	130	\\\hline
6026	 & 	131	 & 	131	\\
6461	 & 	\uc{137}	 & 	138	\\
6576	 & 	137	 & 	137	\\
6693	 & 	138	 & 	138	\\
6811	 & 	139	 & 	139	\\\hline
7151	 & 	\uc{144}	 & 	145	\\
7272	 & 	144	 & 	144	\\
7395	 & 	145	 & 	145	\\
7396	 & 	146	 & 	\uc{146}	\\
7436	 & 	\uc{143}	 & 	143	\\\hline
7519	 & 	146	 & 	146	\\
8003	 & 	151	 & 	151	\\
8132	 & 	152	 & 	152	\\
8133	 & 	153	 & 	\uc{153}	\\
8262	 & 	153	 & 	153	\\\hline
8305	 & 	\uc{151}	 & 	151	\\
9222	 & 	\uc{159}	 & 	159	\\
9454	 & 	163	 & 	163	\\
10086	 & 	\uc{163}	 & 	164	\\
10187	 & 	\uc{167}	 & 	167	\\\hline
\end{tabular}
\begin{tabular}[c]{|c|c|c|}
\hline
\textbf{n}	 & 	\textbf{P(n)}	 & 	\textbf{Q(n)}	\\
\hline
10478	 & 	169	 & 	\uc{169}	\\
11200	 & 	\uc{175}	 & 	175	\\
11245	 & 	\uc{172}	 & 	173	\\
11505	 & 	177	 & 	\uc{177}	\\
12261	 & 	\uc{183}	 & 	183	\\\hline
12467	 & 	\uc{181}	 & 	182	\\
12580	 & 	185	 & 	\uc{185}	\\
12583	 & 	\uc{187}	 & 	188	\\
12904	 & 	188	 & 	189	\\
13066	 & 	188	 & 	188	\\\hline
13370	 & 	\uc{191}	 & 	191	\\
13703	 & 	193	 & 	\uc{193}	\\
13752	 & 	\uc{190}	 & 	191	\\
14041	 & 	196	 & 	197	\\
14210	 & 	196	 & 	196	\\\hline
14381	 & 	197	 & 	197	\\
15052	 & 	204	 & 	\uc{205}	\\
15227	 & 	205	 & 	\uc{206}	\\
15402	 & 	204	 & 	204	\\
15403	 & 	205	 & 	206	\\\hline
15580	 & 	205	 & 	205	\\
15759	 & 	206	 & 	206	\\
16511	 & 	\uc{208}	 & 	209	\\
17254	 & 	\uc{214}	 & 	215	\\
17441	 & 	214	 & 	214	\\\hline
17985	 & 	\uc{217}	 & 	218	\\
18955	 & 	223	 & 	223	\\
19152	 & 	\uc{224}	 & 	224	\\
19522	 & 	\uc{226}	 & 	227	\\
20532	 & 	\uc{232}	 & 	232	\\\hline
20533	 & 	233	 & 	\uc{234}	\\
20737	 & 	233	 & 	233	\\
21122	 & 	\uc{235}	 & 	236	\\
21961	 & 	\uc{241}	 & 	242	\\
22172	 & 	241	 & 	241	\\\hline
22173	 & 	242	 & 	\uc{243}	\\
22385	 & 	242	 & 	242	\\
22654	 & 	\uc{241}	 & 	241	\\
22814	 & 	244	 & 	\uc{244}	\\
23656	 & 	\uc{250}	 & 	251	\\\hline
23875	 & 	250	 & 	250	\\
23876	 & 	251	 & 	\uc{252}	\\
24096	 & 	251	 & 	251	\\
24541	 & 	253	 & 	\uc{253}	\\
24598	 & 	\uc{251}	 & 	251	\\\hline
\end{tabular}
\begin{tabular}[r]{|c|c|c|}
\hline
\textbf{n}	 & 	\textbf{P(n)}	 & 	\textbf{Q(n)}	\\
\hline
26855	 & 	262	 & 	262	\\
28726	 & 	\uc{271}	 & 	271	\\
28783	 & 	\uc{268}	 & 	269	\\
28968	 & 	272	 & 	272	\\
30910	 & 	\uc{281}	 & 	281	\\\hline
31161	 & 	282	 & 	282	\\
33174	 & 	\uc{291}	 & 	291	\\
33434	 & 	292	 & 	292	\\
35518	 & 	\uc{301}	 & 	301	\\
35787	 & 	302	 & 	302	\\\hline
36391	 & 	\uc{301}	 & 	302	\\
37147	 & 	307	 & 	307	\\
39125	 & 	\uc{312}	 & 	313	\\
39625	 & 	317	 & 	317	\\
39626	 & 	318	 & 	319	\\\hline
39909	 & 	318	 & 	318	\\
41958	 & 	\uc{323}	 & 	324	\\
44890	 & 	\uc{334}	 & 	335	\\
47921	 & 	\uc{345}	 & 	346	\\
50126	 & 	353	 & 	353	\\\hline
51051	 & 	\uc{356}	 & 	357	\\
53326	 & 	364	 & 	364	\\
53327	 & 	365	 & 	\uc{365}	\\
53655	 & 	365	 & 	365	\\
56625	 & 	375	 & 	375	\\\hline
56626	 & 	376	 & 	\uc{376}	\\
56964	 & 	376	 & 	376	\\
61851	 & 	\uc{389}	 & 	389	\\
65764	 & 	\uc{401}	 & 	401	\\
66129	 & 	402	 & 	\uc{402}	\\\hline
69797	 & 	\uc{413}	 & 	413	\\
70173	 & 	414	 & 	\uc{414}	\\
73950	 & 	\uc{425}	 & 	425	\\
74337	 & 	426	 & 	\uc{426}	\\
78223	 & 	\uc{437}	 & 	437	\\\hline
78621	 & 	438	 & 	\uc{438}	\\
108375	 & 	510	 & 	510	\\
114014	 & 	523	 & 	523	\\
129359	 & 	\uc{554}	 & 	554	\\
136036	 & 	\uc{568}	 & 	568	\\\hline
142881	 & 	\uc{582}	 & 	582	\\
149894	 & 	\uc{596}	 & 	596	\\
\hline
\end{tabular}
\caption{Comprehensive list of exceptions to \textbf{Theorem \ref{bestResult}}.} \label{table:exceptions}
\end{table}
\end{document}